\definecolor{britishracinggreen}{rgb}{0.0, 0.26, 0.15}
\definecolor{cobalt}{rgb}{0.0, 0.28, 0.67}
    \DeclareSymbolFont{usualmathcal}{OMS}{cmsy}{m}{n}
    \DeclareSymbolFontAlphabet{\mathcal}{usualmathcal}
\numberwithin{equation}{section}
\renewenvironment{proof}{{\scshape Proof.}}{\qed}
\def\be{\begin{equation}}    
\def\ee{\end{equation}}
\def\bitem{\begin{itemize}}
\def\eitem{\end{itemize}}
\def\benum{\begin{enumerate}}
\def\eenum{\end{enumerate}}
\def\isom{\cong}  
\def\ra{\rightarrow}
\def\Var{\mathsf{Var}}
\def\St{\mathsf{St}}
\def\L{\mathbb L}
\def\A{\mathbb A}
\def\P{\mathbb P}
\def\Z{\mathbb Z}
\def\C{\mathbb C}
\def\G{\mathbb G}
\def\O{\mathscr O}
\DeclareMathOperator{\Rep}{Rep}
\DeclareMathOperator{\Spec}{Spec\,}
\DeclareMathOperator{\GL}{GL}
\DeclareMathOperator{\Hilb}{Hilb}
\DeclareMathOperator{\Aut}{Aut}
\DeclareMathOperator{\Hom}{Hom}
\DeclareMathOperator{\End}{End}
\DeclareMathOperator{\rk}{rk}
\newtheoremstyle{conv} % <name>
        {4mm}% <Space above>
        {4mm}% <Space below>
        {\rmfamily}% <Body font>
        {4mm}% <Indent amount>
        {\itshape}% <Theorem head font>
        {.}% <Punctuation after theorem head>
        {1mm}% <Space after theorem head>
        {}% <Theorem head spec (can be left empty, meaning 'normal')> 
\theoremstyle{conv}
\newtheorem*{notation*}{Notation}
\newtheoremstyle{thm} % <name> % (ambienti con dimostrazione)
        {4mm}% <Space above>
        {4mm}% <Space below>
        {\slshape}% <Body font> % 
        {4mm}% <Indent amount>
        {\scshape}% <Theorem head font>
        {.}% <Punctuation after theorem head>
        {1mm}% <Space after theorem head>
        {}% <Theorem head spec (can be left empty, meaning 'normal')> 
\theoremstyle{thm}
\newtheorem{teorr}{Theorem}%[section]
\newtheorem*{teo*}{Theorem}
\newtheorem{prop}{Proposition}[section]
\newtheorem{lemma}[prop]{Lemma}
\newtheorem{teo}[prop]{Theorem}
\theoremstyle{definition}
\newtheorem{example}[prop]{Example}
\newtheorem{definition}[prop]{Definition}
\newtheorem{remark}[prop]{Remark}
\tikzset{commutative diagrams/arrow style=math font}
\tikzset{commutative diagrams/.cd,
mysymbol/.style={start anchor=center,end anchor=center,draw=none}}
\title{On coherent sheaves of small length on the affine plane}
\author{Riccardo Moschetti}
\author{Andrea T. Ricolfi}
\email{riccardo.moschetti@uis.no,~andrea.ricolfi@uis.no}
\keywords{Coherent sheaves, Grothendieck ring of stacks.}
	\subjclass[2010]{14D23,14C05}
\begin{document}

\begin{abstract}
We classify coherent modules on $k[x,y]$ of length at most $4$ and supported at the origin. We compare our calculation with the motivic class of the moduli stack parametrizing such modules, extracted from the Feit--Fine formula.
We observe that the natural torus action on this stack has finitely many fixed points, 
corresponding to connected skew Ferrers diagrams. 
\end{abstract}

\maketitle
    \begingroup
    \hypersetup{linkcolor=black}
    \tableofcontents
    \endgroup
    
\hypersetup{%colorlinks,%
            citecolor=cobalt,%
            linkcolor=britishracinggreen}

%%%%%%%%%%%%%%%%%%%%%%%%%
\section{Introduction}
Finitely generated modules over principal ideal domains satisfy a known structure theorem, and finitely generated modules over a Dedekind domain were characterized by Steinitz over a century ago~\cite{Steinitz1911}. In this paper we study modules of finite length over the polynomial ring
\[
A=k[x,y],
\]
where $k$ is an algebraically closed field of characteristic zero. These naturally correspond to coherent sheaves supported on finitely many points of the affine plane $\A^2$. We only consider modules entirely supported at the origin, as every other module is isomorphic to a direct sum of modules of this type. 

Among all such modules are artinian ring quotients of $A$, corresponding to points of the \emph{punctual Hilbert scheme}
\[
\Hilb^n(\A^2)_0\subset \Hilb^n(\A^2).
\]
It is known that there are infinitely many isomorphism types of artinian $k$-algebras of length $8$. Those of length at most $6$ have been completely described by Brian\c{c}on \cite{Bria1}.
Moreover, Poonen has classified isomorphism types of $k$-algebras of dimension up to $6$ and proves there are infinitely many types in dimension $n\geq 7$~\cite{Poo1}. 

The problem of classifying $A$-modules of finite length up to $A$-linear isomorphism, that we study here, is equivalent to the one of classifying pairs of commuting linear transformations on a finite dimensional vector space; the latter is known to contain the problem of classifying arbitrary tuples of commuting linear transformations, by work of Gelfand and Ponomarev~\cite{GelPon}.

We could not find a reference in the literature for the classification of isomorphism classes of $A$-modules of length $n>2$. We deal with length $n=3$ and $n=4$ in this work.

\subsection{Main result}
For simplicity, we state our classification in terms of (isomorphism classes of) indecomposable modules only, but see Tables \ref{Tbl:mod3} and \ref{Tbl:aut4} for complete lists including the decomposable ones. All modules are supported entirely on $\mathfrak m=(x,y)$, the ideal of the origin. Our main result is the following.

%%%%%%%%%%%%% MAIN RESULT
\begin{teorr}\label{thm:main}
The indecomposable modules of length $3$ are either structure sheaves or the distinguished module $\Hom_k(A/\mathfrak m^2,k)$.
In the length $4$ case, besides structure sheaves, there are two families $\mathcal F_1$ and $\mathcal F_2$ of indecomposable modules, both isomorphic to $\P^1$.
\end{teorr}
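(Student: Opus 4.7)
The plan is to stratify length-$n$ modules $M$ supported at $\mathfrak m$ by the Hilbert function of the associated graded $\mathrm{gr}\,M = \bigoplus_{i\geq 0} V_i$, where $V_i = \mathfrak m^i M/\mathfrak m^{i+1}M$, and within each stratum to put the multiplication maps $\phi_i \colon \mathfrak m/\mathfrak m^2 \otimes V_{i-1} \to V_i$ into normal form modulo the residual action of $\prod_i \GL(V_i)$ together with the freedom of choice of lifts $V_i \hookrightarrow M$. Two strata will be immediate: the Hilbert function $(n)$ forces $M \cong k^n$, decomposable for $n \geq 2$, whereas $\dim V_0 = 1$ forces $M \cong A/I$, a structure sheaf and always indecomposable. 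These cases account for the structure sheaves appearing in the statement, and what remains is to classify modules with at least two minimal generators and $\mathfrak m M \neq 0$.

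For $n = 3$ the only intermediate Hilbert function is $(2,1)$, for which $\mathfrak m^2 M = 0$ and $M$ is determined by the pair $(X, Y) \colon V_0 \to V_1$, that is, a $2 \times 2$ matrix modulo $\GL_2 \times k^\times$. I will classify orbits by matrix rank. Rank $2$ should yield a single isomorphism class, to be identified with $\Hom_k(A/\mathfrak m^2, k)$ by computing the $A$-action on a dual basis; indecomposability follows from the fact that its socle is $1$-dimensional. Orbits of rank $\leq 1$ will always split off a copy of the residue field.

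For $n = 4$ the nontrivial Hilbert functions are $(3,1)$, $(2,2)$ and $(2,1,1)$. A rank analysis parallel to the length-$3$ case will show that every module with Hilbert function $(3,1)$ splits off either a copy of $k$ or of the distinguished length-$3$ module. The family $\mathcal F_1$ is expected to arise from the $(2,2)$ stratum, where $\mathfrak m^2 M = 0$ and $M$ corresponds to a matrix pencil $aX+bY$ of $2\times 2$ matrices modulo $\GL_2 \times \GL_2$. By the Kronecker--Weierstrass classification, pencils satisfying $\im X + \im Y = V_1$ decompose as direct sums of cyclic blocks except when they form a single $2 \times 2$ Jordan block, which is indecomposable and whose double eigenvalue varies freely in $\P^1$; this should give $\mathcal F_1 \cong \P^1$.

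The family $\mathcal F_2$ will arise from the subtler $(2,1,1)$ stratum. I will fix lifts $e_1, e_2 \in M$ of a basis of $V_0$, $f \in \mathfrak m M$ lifting a generator of $V_1$, and $g \in \mathfrak m^2 M$ a generator of $V_2$, and write $xe_i = a_i f + b_i g$, $y e_i = c_i f + d_i g$, $xf = \alpha' g$, $yf = \beta' g$. The commutativity $xy = yx$ then imposes $a_i \beta' = c_i \alpha'$ for $i = 1, 2$; in particular $\phi_1$ cannot have rank $2$, since that would force $\alpha' = \beta' = 0$ and contradict $V_2 \neq 0$. Hence $\phi_1$ has rank $1$ and shares a direction $[\alpha:\beta] \in \P^1$ with $\phi_2$. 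Normalizing this common direction and exhausting the residual $\GL$ and lift freedoms should reduce the data to two parameters $(D_1, D_2)$; the locus $D_2 \neq 0$ will collapse to $(D_1, D_2) = (0, 1)$, while $D_2 = 0$ produces modules splitting off a copy of the residue field. The indecomposables will then be parametrized by $[\alpha:\beta] \in \P^1$, giving $\mathcal F_2 \cong \P^1$. Indecomposability at each parameter I will verify by checking that the socle is $1$-dimensional, and non-isomorphism between distinct $[\alpha:\beta]$ by recovering $\lambda = \beta/\alpha$ from the annihilator of $M$, which contains $xy - \lambda x^2$. The hard part will be precisely this $(2,1,1)$ book-keeping, where residual symmetry, choice-of-lifts, and commutativity interact across two graded levels; a welcome consistency check will come from matching the motivic class of the resulting moduli with the Feit--Fine prediction mentioned in the introduction.
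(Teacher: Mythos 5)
Your proposal is correct and, at bottom, follows the same route as the paper: stratifying by the Hilbert function of the associated graded is the paper's stratification by number of generators refined by the structure of $\mathfrak m\cdot M$ --- your strata $(3,1)$, $(2,2)$, $(2,1,1)$ are exactly the paper's $r=3$ stratum and the two cases $\mathfrak m\cdot M=k\oplus k$, $\mathfrak m\cdot M=\O_Z$ of the $r=2$ stratum (so your $\mathcal F_1$/$\mathcal F_2$ labels are swapped relative to the paper's, which is immaterial), and your length-$3$ and $(3,1)$ analyses reproduce Propositions \ref{prop:n,n-1}, \ref{prop:l3-2} and \ref{cor:dec7183}. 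The one genuine difference is the $(2,2)$ stratum: the paper proves Lemma \ref{equivrelation} and then runs the $\GL_2\times\GL_2$ orbit analysis by hand in Proposition \ref{prop:indec2} (rank cases, then Jordan form of $A_y$ when $A_x$ is invertible), whereas you invoke the Kronecker--Weierstrass classification of $2\times 2$ pencils; this is a clean shortcut, and uniqueness of the canonical form also gives pairwise non-isomorphism along the family for free, but you should state explicitly that a module isomorphism induces a strict equivalence of pencils (this is precisely what Lemma \ref{equivrelation} verifies, using that $\mathfrak m\cdot M$ and $M/\mathfrak m\cdot M$ are preserved), so that indecomposability and the $\P^1$ of double eigenvalues really transfer from pencils to modules. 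Your $(2,1,1)$ plan is, step for step, the paper's Proposition \ref{prop:indec1} with the order of normalizations reversed: the paper fixes the direction $t$ of $\O_Z=\mathfrak m\cdot M$ first, while you extract the common direction from the commutativity constraint; the endpoint you predict --- one indecomposable per direction, with the $D_2=0$ locus splitting off a copy of $k$ --- is exactly what that computation yields, and your socle and annihilator checks are valid ways to certify indecomposability and to separate distinct parameters, which the paper leaves implicit in the normal-form argument. So there is no gap, only the acknowledged book-keeping in the $(2,1,1)$ stratum left to write out.
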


The stack of $A$-modules of length $n$ will be denoted $\mathcal C(n)$ throughout. We let
\[
\mathcal C(n)_0\subset \mathcal C(n)
\]
be the closed substack
parametrizing modules entirely supported at the origin.
Even though these stacks are not well understood in general, remarkably their motivic classes in the Grothendieck ring of stacks can be computed for arbitrary $n$, by means of the Feit--Fine formula \cite{FF1,BBS,BM15}. The knowledge of the motivic aspect of the theory is for us both a motivation to tackle the classification problem, and a way to check our results. More precisely, our strategy goes as follows. We stratify $\mathcal C(n)_0$ by locally closed substacks
\[
\mathcal X_r(n)\subset \mathcal C(n)_0, 
\]
each parametrizing modules $M$ such that $\dim_k M/\mathfrak m\cdot M=r$. In other words, we study modules $M$ by means of their discrete invariant
\[
r_M=\textrm{minimal number of generators of }M.
\]
We then analyze each stratum separately, and we compute its motivic class. Since we are inside a quotient stack, this requires us to compute all possible automorphism groups. To confirm our calculation we verify that the sum
\[
\sum_{r=1}^n\,\bigl[\mathcal X_r(n)\bigr]\in K_0(\St_k)
\]
reconstructs the class $[\mathcal C(n)_0]$ predicted by the Feit--Fine formula.

\medskip
In Section \ref{sec:puntifissi} we study the natural action of the torus $\mathbf T=\mathbb G_m^2$ on the moduli stack $\mathcal C(n)$ and we prove that it has finitely many fixed points, corresponding to certain types of skew Ferrers diagrams. We will also observe that the generating function for the numbers of indecomposable torus-fixed modules has a well-known combinatorial interpretation in terms of parallelogram polyominoes.

\section{Main strategy and conventions}\label{strategy}
Let $k$ be an algebraically closed field of characteristic zero. We let
\[
\mathfrak m\subset A=k[x,y]
\]
be the maximal ideal of the origin in $\A^2$.
All $A$-modules $M$ are assumed to be of finite length and entirely supported at $\mathfrak m$. Note that since the function
\[
\mathsf r:M\mapsto \dim_k M/\mathfrak m\cdot M
\]
is upper semi-continuous, there are well-defined locally closed substacks
\[
\mathcal X_r(n)\subset \mathcal C(n)_0
\]
parametrizing modules with $r$ as minimal number of generators.
The motivic class of $\mathcal X_r(n)$ makes sense, and we have a decomposition
\[
\bigl[\mathcal C(n)_0\bigr]=\sum_{r=1}^n\,\bigl[\mathcal X_r(n)\bigr]\in K_0(\St_k).
\]
See~\cite{EKStacks} for an introduction to the Grothendieck group of algebraic stacks.

If $M$ lies in $\mathcal X_r(n)$, we will find useful to fix a $k$-linear basis 
\[
\set{v_1,v_2,\dots,v_n}\subset M
\]
such that the first $r$ vectors generate the module, and $v_{r+1},\dots,v_n$ generate the submodule
\[
\mathfrak m\cdot M\subset M
\]
as a $k$-vector space.

\begin{lemma}\label{lemma:lemma1}
With the above choice of basis, $x\cdot v_i$ and $y\cdot v_i$ belong to $\mathfrak m\cdot M$.
\end{lemma}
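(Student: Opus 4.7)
The plan is straightforward, because this lemma is essentially a tautology that unpacks a definition. The key observation I would emphasize at the start is that $\mathfrak m = (x,y)$, so $x$ and $y$ themselves lie in $\mathfrak m$. By the very definition of $\mathfrak m \cdot M$ as the $k$-span of products $a\cdot m$ with $a\in\mathfrak m$ and $m\in M$, any element of the form $x\cdot v$ or $y\cdot v$ with $v\in M$ automatically belongs to $\mathfrak m\cdot M$.

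The only case that merits a separate sentence is when $i>r$, where $v_i$ already lies in $\mathfrak m\cdot M$. One can handle this either by repeating the previous argument (since $x,y\in\mathfrak m$ does not depend on $v_i$), or, more conceptually, by noting that $\mathfrak m\cdot M$ is itself an $A$-submodule of $M$ (being the image of the $A$-linear multiplication map $\mathfrak m\otimes_k M\to M$), hence is stable under multiplication by any element of $A$, and in particular by $x$ and $y$. So in every case, applying this to $v=v_i$ for $i=1,\dots,n$ yields the assertion.

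There is no genuine obstacle; the lemma is a setup step rather than a substantive result. Its real content, as I read it, is not the verification itself but the consequence it enables downstream: with respect to the ordered basis $v_1,\dots,v_n$ in which $v_{r+1},\dots,v_n$ form a $k$-basis of $\mathfrak m\cdot M$, both matrices representing multiplication by $x$ and by $y$ on $M$ have their nonzero entries confined to the last $n-r$ rows. This block structure is presumably what the paper will exploit in the case-by-case analysis of the strata $\mathcal X_r(n)$.
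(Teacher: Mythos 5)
Your proof is correct, and it is genuinely more direct than the paper's. You observe that $x,y\in\mathfrak m$, so $x\cdot v_i$ and $y\cdot v_i$ lie in $\mathfrak m\cdot M$ immediately from the definition of $\mathfrak m\cdot M$ as (the span of) products $a\cdot m$ with $a\in\mathfrak m$, $m\in M$ — no reference to the chosen basis is needed, and your remark that $\mathfrak m\cdot M$ is an $A$-submodule disposes of the $i>r$ case as well. The paper instead argues in coordinates: it writes $x\cdot v_i=\sum_j a_j v_j$, applies the projection $\pi\colon M\twoheadrightarrow M/\mathfrak m\cdot M$, uses $x\cdot\pi(v_i)=0$ and the fact that $\pi(v_1),\dots,\pi(v_r)$ form a basis of $M/\mathfrak m\cdot M$ to conclude $a_1=\dots=a_r=0$, so that $x\cdot v_i\in\langle v_{r+1},\dots,v_n\rangle_k=\mathfrak m\cdot M$. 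The two arguments reach the same place (and the paper's step $x\cdot\pi(v_i)=0$ is itself just your observation that $x\in\mathfrak m$), but the paper's version delivers explicitly the coefficient statement that the first $r$ coordinates of $x\cdot v_i$ and $y\cdot v_i$ vanish — exactly the block structure of the multiplication tables exploited in the later strata analysis — whereas your cleaner argument obtains that same consequence only after invoking the choice that $v_{r+1},\dots,v_n$ is a $k$-basis of $\mathfrak m\cdot M$, which you correctly flag as the real point of the lemma.
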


\begin{proof}
Let $\pi:M\twoheadrightarrow  M/\mathfrak m\cdot M$ be the canonical projection. By our choice of basis, $v_{r+1},\dots,v_n$ form a $k$-basis of $\mathfrak m\cdot M$ and $M/\mathfrak m\cdot M$ is generated over $k$ by the images of $v_1,\dots,v_r$. Writing $x\cdot v_i=\sum_{j=1}^na_jv_j$ for some $a_j\in k$, from the relation
\[
0=x\cdot \pi(v_i)=\pi(x\cdot v_i)=\sum_{j=1}^na_j\pi(v_j)=\sum_{j=1}^ra_j\pi(v_j)
\]
one deduces that $a_j=0$ for $j=1,\dots,r$. Therefore $x\cdot v_i=\sum_{j=r+1}^na_jv_j$ belongs to $\mathfrak m\cdot M$, and similarly for $y\cdot v_i$.
\end{proof}

\medskip
The lemma says that after fixing a suitable basis for $M$ one can see multiplication by $x$ and $y$ as $k$-linear maps
\be\label{puttanazza}
\begin{tikzcd}
\langle v_1,\dots,v_r\rangle_k
\arrow[r, shift left]{}{x}
\arrow[r, shift right]{}[swap]{y}
& \mathfrak m\cdot M.
\end{tikzcd}
\ee
If the additional condition $x\cdot(y\cdot v_i)=y\cdot(x\cdot v_i)$ is fulfilled for $i=1,\dots,r$ then the two $k$-linear maps above characterize $M$. Such point of view will be essential when dealing with length $4$ modules satisfying $r_M=2$. Then our strategy will be to classify all pairs of linear maps as above, for each choice of length two submodule $\mathfrak m\cdot M\subset M$.

We anticipate here that we will sometimes describe modules by means of their ``multiplication table''. This is just a way to represent the action of $x$ and $y$ on a chosen basis $v_1,\dots,v_n$. The first $r$ entries of the table are to be filled in according to Lemma \ref{lemma:lemma1}, whereas the last $n-r$ are describing the submodule $\mathfrak m\cdot M\subset M$. Occasionally, we will encounter modules that can be visually represented as certain types of \emph{skew Ferrers diagrams}, see Example \ref{ex:distinguishedl3} below. In Section \ref{sec:puntifissi} we will see that these special modules are the finitely many fixed points of the natural torus action on the moduli stack $\mathcal C(n)$. For instance, as is well-known, classical Ferrers diagrams correspond to the fixed points of the torus action on $\Hilb^n(\A^2)$, studied in \cite{ESHilb}.
Recall that Ferrers diagrams (also called Young diagrams) correspond to ordinary partitions of integers, whereas a \emph{skew Ferrers diagram} is a difference of two Ferrers diagrams.

Our convention for (skew) Ferrers diagrams is to use the French notation; when a module can be represented by a skew Ferrers diagram, we understand multiplication by $x$ (resp.~$y$) in the module as shifting position to the right (resp.~to the top) in the diagram. The following example illustrates our conventions.

\begin{example} \label{ex:distinguishedl3}
Consider the module $M=A/\mathfrak m^2=k[x,y]/(x^2,xy,y^2)$. This is the unique non-curvilinear structure sheaf of length $3$, with natural $k$-basis $1,x,y$. The multiplication tables
\[
\begin{tabular}{cccc}
& $1$ & $x$ & $y$ \\
\toprule
$x\cdot$ & $x$ & $0$ & $0$ \\
$y\cdot$ & $y$ & $0$ & $0$ \\
\bottomrule
\end{tabular}
\qquad\qquad
\begin{tabular}{cccc}
& $1^\ast$ & $x^\ast$ & $y^\ast$ \\
\toprule
$x\cdot$ & $0$ & $1^\ast$ & $0$ \\
$y\cdot$ & $0$ & $0$ & $1^\ast$ \\
\bottomrule
\end{tabular}
\]
describe, respectively, the $A$-linear structure of $M$ and of its $k$-linear dual $M^\ast=\Hom_k(M,k)$. These tables can be represented as diagrams
\[
\ytableausetup{boxsize=normal}
  \begin{ytableau}
   y \\
   1 & x
  \end{ytableau}
\qquad\qquad\qquad
\ytableausetup{boxsize=normal}
  \begin{ytableau}
  x^\ast & 1^\ast \\
   \none & y^\ast
  \end{ytableau}
\]
where the first one is a classical Ferrers diagram and the second one is the \emph{skew} Ferrers diagram corresponding to $M^\ast$. We will see as a part of Theorem \ref{thm:main} that $M^\ast$ is the unique indecomposable module of length $3$ that is not a structure sheaf. In the following we will avoid writing the name of the generators inside the diagrams.
\end{example}

As a warm-up to illustrate our classification technique, we now describe all isomorphism types of $A$-modules belonging to the stratum $\mathcal X_{n-1}(n)\subset \mathcal C(n)_0$. Recall that by $r=r_M$ we mean the minimal number of generators of $M$.

\begin{prop}\label{prop:n,n-1}
Any module $M$ of length $n\geq 3$ with $r_M=n-1$ is isomorphic to $k^{n-3}\oplus N$, where $N$ has length $3$ and $r_N=2$.
\end{prop}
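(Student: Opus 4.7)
The plan is to exploit the fact that $r_M = n-1$ forces $\dim_k \mathfrak{m}M = 1$, reducing the problem to a linear-algebra question in a one-dimensional space. I would start by fixing a generator $w$ of $\mathfrak{m}M$, and observing that $\mathfrak{m}\cdot w = 0$: indeed $\mathfrak{m}\cdot w \subseteq \mathfrak{m}^2 M \subseteq \mathfrak{m}M = kw$, so the $A$-submodule $\mathfrak{m}\cdot w$ of $kw$ is either $0$ or $kw$, and Nakayama's lemma (or the finite length of $M$) rules out the latter. Then I would pick a minimal system of generators $v_1,\dots,v_{n-1}$ of $M$ and complete it to a $k$-basis by appending $w$. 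Since $\mathfrak{m}M = kw$, Lemma \ref{lemma:lemma1} gives scalars $\alpha_i,\beta_i \in k$ with $x v_i = \alpha_i w$ and $y v_i = \beta_i w$, and the assignment $v_i \mapsto (\alpha_i,\beta_i)$ defines a $k$-linear map $\Phi\colon \langle v_1,\dots,v_{n-1}\rangle_k \to k^2$. The map $\Phi$ must be nonzero, since otherwise $\mathfrak{m}M$ would vanish, contradicting $w\neq 0$.

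The remainder of the argument splits according to $\rk \Phi \in \{1,2\}$. In either case I would change basis on the $v_i$'s so that the last $n-1-\rk \Phi$ of them, call them $v_{\rk\Phi+1}',\dots,v_{n-1}'$, span $\ker \Phi$; they are then annihilated by $\mathfrak{m}$, so each generates a simple $A$-module $k$, and together they contribute a direct summand $k^{n-1-\rk \Phi}$. If $\rk \Phi = 2$, then $N = \langle v_1',v_2',w\rangle_k$ is a length-$3$ $A$-submodule generated by $v_1',v_2'$ (in fact, it is the distinguished module of Example \ref{ex:distinguishedl3}), and one gets $M \cong N \oplus k^{n-3}$. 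If $\rk \Phi = 1$, then $Av_1' = \langle v_1',w\rangle_k$ has length $2$ and a single generator; bundling one of the $n-2$ trivial summands together with $Av_1'$ produces a length-$3$ module $N$ with $r_N = 2$, and the remaining $n-3$ summands form $k^{n-3}$.

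The only subtlety is checking that the $k$-vector space decompositions are in fact $A$-module decompositions. This is immediate: each basis vector of $\ker \Phi$ is $\mathfrak{m}$-annihilated and hence generates an $A$-invariant subspace isomorphic to $k$; the complementary piece containing $w$ is $\mathfrak{m}$-invariant by construction, since $xv_i',yv_i'\in kw$ and $xw = yw = 0$. The commutativity constraint $xy\cdot v_i' = yx\cdot v_i'$ is automatically satisfied because both sides land in $\mathfrak{m}\cdot w = 0$, so no extra compatibility needs to be imposed. This leaves nothing of real weight to verify; the only bookkeeping work is the basis change that aligns generators with $\ker \Phi$, and the elementary identification of $N$ in each of the two cases.
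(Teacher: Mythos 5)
Your proof is correct: the facts you use ($\dim_k\mathfrak m M=1$, $\mathfrak m\cdot\mathfrak m M=0$, $\Phi\neq 0$) all hold, the two spans you produce are indeed $A$-submodules, and the rank dichotomy covers all cases, so the decomposition $M\isom k^{n-3}\oplus N$ follows. The underlying computation is the same as the paper's -- both arguments fix a basis with $w$ spanning $\mathfrak m M$ and do linear algebra on the coefficient matrix recording $x\cdot v_i$ and $y\cdot v_i$ -- but your organization is genuinely different in execution: the paper normalizes the multiplication table step by step (clearing the $a_i$, then either splitting off a trivial summand and invoking induction on $n$, or clearing the $b_i'$), whereas you package the two multiplications into a single map $\Phi:\langle v_1,\dots,v_{n-1}\rangle_k\to k^2$ and get the splitting in one stroke from a basis adapted to $\ker\Phi$. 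What this buys is the elimination of the induction and a transparent identification of the possible summands $N$: $\rk\Phi=2$ gives the distinguished module $(A/\mathfrak m^2)^\ast$ of Example \ref{ex:distinguishedl3}, while $\rk\Phi=1$ gives $k\oplus\O_Z$ with $Z$ of length $2$, so your argument simultaneously yields the classification of Proposition \ref{prop:l3-2}, which the paper instead extracts afterwards from its proof of Proposition \ref{prop:n,n-1}. The paper's table-reduction, on the other hand, is the template it reuses verbatim for the harder length-$4$, $r=2$ strata, which is presumably why it is phrased that way.
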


\begin{proof}
Any $A$-module $N$ of length $3$ with $r_N=2$ gives rise to
$M=k^{n-3}\oplus N$, with $r_M=n-1$. Conversely, if $M$ is generated over $A$ by $v_1,\dots,v_{n-1}$ and $v_n$ generates $\mathfrak m\cdot M=k$, then the multiplication table for $M$ looks like
\[
\begin{tabular}{cccccc} 
 & $v_1$ & $v_2$ & $\cdots$ & $v_{n-1}$ & $v_n$ \\
\toprule 
$x\cdot$ & $a_1v_n$ & $a_2v_n$ & $\cdots$ & $a_{n-1}v_n$ & $0$ \\
$y\cdot$ & $b_1v_n$ & $b_2v_n$ & $\cdots$ & $b_{n-1}v_n$ & $0$\\
\bottomrule
\end{tabular}
\]

\medskip
\noindent
where $a_i$ and $b_i$ are scalars in $k$. Up to relabeling the generators we can assume either $a_1$ or $b_1$ to be nonzero. We deal with the former case, since the latter is completely symmetric.

If $a_1\neq 0$ we can assume it is equal to $1$. Replacing $v_i$ by $v_i-a_iv_1$ for $i=2,\dots,n-1$ we get the multiplication table
\[
\begin{tabular}{cccccc} 
 & $v_1$ & $v_2$ & $\cdots$ &  $v_{n-1}$ & $v_n$ \\
\toprule 
$x\cdot$ & $v_n$ & $0$ & $\cdots$ & $0$  & $0$\\
$y\cdot$ & $b_1v_n$ & $b_2'v_n$ & $\cdots$  & $b_{n-1}'v_n$ & $0$\\
\bottomrule
\end{tabular}
\]

\medskip
\noindent
If $b_2'=0$ then $M=\langle v_2\rangle_k\oplus F$ for $F$ a module of length $n-1$, and the result follows by induction. If $b_2' \neq 0$, we can assume $b_2'=1$ and replace $v_i$ by $v_i-b_i'v_{2}$ for $i=3,\dots,n-1$. This yields 
\[
\begin{tabular}{ccccccc} 
 & $v_1$ & $v_2$ & $v_3$ & $\cdots$ & $v_{n-1}$ & $v_n$ \\
\toprule 
$x\cdot$ & $v_n$ & $0$ & $0$ & $\cdots$ &  $0$  &  $0$ \\
$y\cdot$ & $b_1v_n$ & $v_n$ & $0$ & $\cdots$ &  $0$  &  $0$ \\
\bottomrule
\end{tabular}
\]

\medskip
\noindent
so that $M=k^{n-3}\oplus F$ where $F$ is a length $3$ module generated by $v_1$ and $v_2$.
\medskip
\begin{comment}
\noindent
\textbf{Case C: $a_{n-1}\neq 0=b_{n-1}$.} Replacing $v_i$ by $v_i'=v_i-
a_iv_1$ for $i=2,\dots,n-1$ we get the multiplication table
\[
\begin{tabular}{cccccc} 
 & $v_1$ & $v_2'$ & $\cdots$ & $v_{n-2}'$ & $v_{n-1}'$ \\
\toprule 
$x\cdot$ & $v_n$ & $0$ & $\cdots$ & $0$ & $0$ \\
$y\cdot$ & $b_1v_n$ & $b_2'v_n$ & $\cdots$ & $b_{n-2}'v_n$ & $b_{n-1}'v_n$ \\
\bottomrule
\end{tabular}
\]

\medskip
\noindent
where $b_{n-1}'=-b_1a_{n-1}$. Thus if $b_1\neq 0$ we are back to case  B, whereas if $b_1=0$ we are back to case A.

\medskip
\noindent
\textbf{Case D: $a_{n-1}b_{n-1}\neq 0$.} It is enough to replace $v_{n-1}$ by $v_{n-1}'=v_{n-1}-a_{n-1}v_1$ to get $x\cdot v_{n-1}'=0$, so we reduce to the previous cases again.
\end{comment}
\end{proof}

\section{Motivic interpretation}
The Grothendieck ring of algebraic stacks $K_0(\St_k)$ carries a \emph{power structure} naturally extending the one present on the classical Grothendieck ring of varieties, which is due to Gusein-Zade, Luengo
and Melle-Hern\'andez \cite{GLMps}. We refer to \cite{BM15} for more details. Define the generating functions
\[
\mathsf C(t)=\sum_{n=0}^\infty\, \bigl[\mathcal C(n)\bigr] t^n,
\qquad
\mathsf C_0(t)=\sum_{n=0}^\infty\, \bigl[\mathcal C(n)_0\bigr] t^n.
\]
As observed in \cite{BM15}, these power series are related, via the power structure on $K_0(\St_k)$, by $\mathsf C(t)=\mathsf C_0(t)^{\L^2}$. 
We now recall a formula for $\mathsf C(t)$ originally proved by Feit and Fine \cite{FF1} in the context of point counting over $\mathbb F_q$.

\begin{teo}\cite{BBS,BM15}
In $K_0(\St_\C)\llbracket t\rrbracket$, one has the formula
\[
\mathsf C(t)=\prod_{k=1}^\infty\prod_{m=1}^\infty (1-\L^{2-k}t^m)^{-1}.
\]
\end{teo}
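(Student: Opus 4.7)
My plan is to realize the moduli stack as a quotient and lift Feit and Fine's original argument to the motivic setting. Giving a length $n$ $A$-module on a fixed $n$-dimensional vector space is the same data as a pair of commuting endomorphisms $(X,Y)$, with isomorphisms corresponding to simultaneous $\GL_n$-conjugation, so
\[
\mathcal C(n) \cong \bigl[\mathcal M_n / \GL_n\bigr], \qquad \mathcal M_n = \set{(X,Y)\in \mathfrak{gl}_n^2 : XY=YX}.
\]
This identifies $[\mathcal C(n)] = [\mathcal M_n]/[\GL_n]$ in $K_0(\St_k)$, using $[\GL_n]=\L^{\binom{n}{2}}\prod_{i=1}^n(\L^i-1)$.

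Next I would stratify $\mathcal M_n$ by the conjugacy class of $X$. Since $XY=YX$ forces $Y$ to preserve the generalized eigenspace decomposition of $X$, the problem factors over eigenvalues in $\A^1$ and reduces to pairs of commuting nilpotent matrices. For each partition $\mu$ of $n$, the locus of nilpotent matrices of Jordan type $\mu$ is a single $\GL_n$-orbit with explicit centralizer, and the space of matrices $Y$ commuting with a fixed such $X$ is a vector space of known dimension depending only on $\mu$. Each stratum is then a Zariski-locally trivial fibration whose motivic class is an explicit rational function in $\L$, parallel to Feit and Fine's original count over $\mathbb F_q$; since all ingredients depend only on partition combinatorics, the count lifts verbatim to $K_0(\St_k)$.

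The technical heart, and the main obstacle, is the combinatorial identity that rearranges the double sum $\sum_n\sum_{\mu\vdash n}$ of centralizer contributions into the claimed double product indexed by $(k,m)$. A cleaner alternative to a direct Hall-type manipulation is to exploit the power structure identity $\mathsf C(t)=\mathsf C_0(t)^{\L^2}$ recalled above: one first localizes to modules supported at the origin, and then the exponential side of the power structure converts a sum indexed by indecomposable such modules into a product. Identifying these indecomposable contributions and checking that they recombine into $\prod_{k,m\geq 1}(1-\L^{2-k}t^m)^{-1}$ is what ultimately has to be proved by hand.
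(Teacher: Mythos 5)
The paper does not prove this theorem at all: it is imported verbatim from \cite{BBS,BM15} (with the subsequent remark only explaining why the identity, proved over $\C$ in those references, persists over any algebraically closed field via \cite{EKStacks}). So the relevant question is whether your outline would stand as a proof on its own, and it does not. Your setup is fine and is indeed the standard starting point: $\mathcal C(n)=C_n/\GL_n$ with $C_n$ the commuting variety, $[\mathcal C(n)]=[C_n]/[\GL_n]$ since $\GL_n$ is special, and the reduction via generalized eigenspaces to commuting pairs with $X$ nilpotent of fixed Jordan type, where the commutant of $X$ is an affine space of dimension depending only on the partition. But the entire content of the Feit--Fine formula is the step you explicitly defer: rearranging the resulting sum over multisets of eigenvalues and Jordan types (weighted by $\L^{\dim Z(X)}/[\mathrm{Stab}]$) into the double product $\prod_{k,m}(1-\L^{2-k}t^m)^{-1}$. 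Saying this ``has to be proved by hand'' leaves the theorem unproved; this identity is precisely the nontrivial part of Feit--Fine's argument and of its motivic lifting in \cite{BBS,BM15}.

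Your proposed ``cleaner alternative'' does not repair this and is in fact flawed. The power structure identity $\mathsf C(t)=\mathsf C_0(t)^{\L^2}$ only trades the full stack for the punctual one: the exponent $\L^2=[\A^2]$ accounts for moving the support point, and afterwards you would still have to prove $\mathsf C_0(t)=\prod_{k,m\geq 1}(1-\L^{-k}t^m)^{-1}$, which is equivalent to the original statement. Moreover, the heuristic that the exponential side of the power structure converts ``a sum over indecomposable modules into a product'' fails at the level of stacks: by Krull--Schmidt a module decomposes uniquely into indecomposables, but the automorphism group of a direct sum is strictly larger than the product of the automorphism groups of the summands, because of the unipotent contributions from $\Hom$'s between distinct summands (Theorem \ref{thm:autquiver}). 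For instance, Table \ref{Tbl:mod3} gives $[\Aut_A(k\oplus\O_Z)]=\L^3(\L-1)^2$, whereas $[\Aut_A(k)]\cdot[\Aut_A(\O_Z)]=\L(\L-1)^2$. Hence $\mathsf C_0(t)$ admits no naive Eulerian factorization over indecomposables supported at the origin, and this bookkeeping of $\Hom$-spaces is exactly the computation you were trying to avoid.
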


\begin{remark}
The above relation, proved over $\C$ in the references given, holds in the Grothendieck ring of stacks over any algebraically closed field $k$. The main technical result needed for the proof in \cite{BM15}, besides the existence of the Jordan normal form, is that $K_0(\St_k)$ is isomorphic to the localization of $K_0(\Var_k)$ at the classes $\L$ and $\L^n-1$ for $n>0$. This is true over any field by \cite[Theorem $1.2$]{EKStacks}.
\end{remark}

The properties of the power structure \cite{BM15} allow one to deduce
\be\label{punctual0}
\mathsf C_0(t)=\mathsf C(t)^{\L^{-2}}=\prod_{k=1}^\infty\prod_{m=1}^\infty (1-\L^{-k}t^m)^{-1}.
\ee
Expanding the above series, one finds
\[
\mathsf C_0(t)=1+\frac{1}{\L-1}t+\left(\frac{1}{[\GL_2]}+\frac{\L+1}{\L(\L-1)}\right)t^2+\cdots
\]
The geometric interpretation of the first coefficients is clear: 
\bitem
\item [$(0)$] the motivic class of $\mathcal C(0)_0=\Spec k$ is just $1$;
\item [$(1)$] $1/(\L-1)$ is the motivic class of the stack $\mathcal C(1)_0=\textrm{B}\G_m$, which has only one point (corresponding to the module $k=A/\mathfrak m$), weighted by its automorphism group $\G_m$;
\item [$(2)$] the stack $\mathcal C(2)_0$ decomposes as $\mathcal X_2(2)=\textrm B\GL_2$ (corresponding to $k\oplus k$) union $\mathcal X_1(2)=\P^1/\G_a\rtimes\G_m$, where the projective line $\P^1=\P(\mathfrak m/\mathfrak m^2)$ represents the punctual Hilbert scheme $\Hilb^2(\A^2)_0$, which parametrizes structure sheaves of length $2$ supported at the origin (each having automorphism group $\G_a\rtimes \G_m$).
\eitem
 
\medskip
We aim at giving a similar interpretation of the next two coefficients of $\mathsf C_0(t)$, using our stratification. 

When we present a (substack of a) stratum $\mathcal X_r(n)$ as a quotient stack $Y/G$, we will say that any module belonging to this stratum has \emph{motivic contribution} the motive of the scheme $Y$.

\subsection{Automorphism groups}
The following general result on automorphism groups of quiver representations is going to help us compute all automorphism groups of $A$-modules of finite length.

\begin{teo}\cite[Prop.~$2.2.1$]{Brion1}\label{thm:autquiver}
Let $M$ be a finite dimensional representation of a quiver $Q$. Then $\Aut(M)$ is a connected linear algebraic group, with a decomposition 
\[
\Aut(M)=U\rtimes \prod_{i=1}^s\GL_{m_i},
\]
where $U$ is a closed normal unipotent subgroup and $m_1,\dots,m_s$ are the multiplicities of the indecomposable summands of $M$.
\end{teo}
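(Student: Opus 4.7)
The plan is to extract $\Aut(M)$ from the structure of the finite-dimensional associative $k$-algebra $E=\End_Q(M)$, whose multiplicative group of units is precisely $\Aut(M)$. Since $E^\times\subset E$ is a Zariski-open subscheme of an affine space, this description already endows $\Aut(M)$ with the structure of a linear algebraic group, and all further assertions --- connectedness, unipotency, and the explicit Levi decomposition --- will be read off from the internal structure of $E$.

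First, I would invoke the Krull--Schmidt theorem for finite-dimensional quiver representations to write $M\cong\bigoplus_{i=1}^s M_i^{\oplus m_i}$ with the $M_i$ pairwise non-isomorphic indecomposables. By Fitting's lemma, each $\End_Q(M_i)$ is a local $k$-algebra, and since $k$ is algebraically closed its residue field is $k$ itself, so $\End_Q(M_i)=k\cdot\mathrm{id}\oplus\Nil(M_i)$. Expanding $E$ block-wise along the Krull--Schmidt decomposition, I would identify the Jacobson radical $J\subset E$ as the sum of all off-diagonal blocks $\mathrm{Mat}_{m_j\times m_i}(\Hom_Q(M_i,M_j))$ with $i\ne j$ together with the nilpotent part $\mathrm{Mat}_{m_i}(\Nil(M_i))$ of each diagonal block. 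This is a two-sided nilpotent ideal, and the quotient $E/J$ is canonically $\prod_{i=1}^s\mathrm{Mat}_{m_i}(k)$.

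Next I would apply the Wedderburn--Malcev splitting theorem to produce a semisimple subalgebra $S\subset E$ with $E=S\oplus J$ as $k$-vector spaces and $S\xrightarrow{\sim}E/J$ as algebras. Passing to units gives $\Aut(M)=E^\times=(1+J)\rtimes S^\times$; setting $U=1+J$, the nilpotency of $J$ ensures that $U$ is a closed unipotent subgroup of $E^\times$, and normality follows from $J$ being a two-sided ideal, while identifying $S^\times$ with $\prod_{i=1}^s\GL_{m_i}$ via the Wedderburn isomorphism yields the desired decomposition. Connectedness is then automatic, since $U$ is isomorphic as a variety to the affine space $J$ and each $\GL_{m_i}$ is connected.

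The main obstacle is the Wedderburn--Malcev step, which is where one genuinely uses that $E$ is finite dimensional and $E/J$ is separable over $k$ (automatic in characteristic zero, or whenever the residue algebra is a product of matrix algebras over $k$). The existence of the lift $S\hookrightarrow E$ is what upgrades the abstract short exact sequence $1\to U\to\Aut(M)\to(E/J)^\times\to 1$ to a genuine semidirect product; once the splitting is in place, matching the Levi factor with $\prod_{i=1}^s\GL_{m_i}$ is a direct bookkeeping through the Krull--Schmidt decomposition.
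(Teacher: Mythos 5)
Your argument is correct and is essentially the standard proof of this result, which the paper does not reprove but simply cites from Brion: identify $\Aut(M)$ with the unit group of the finite-dimensional algebra $E=\End_Q(M)$, compute its radical $J$ via Krull--Schmidt and Fitting, and split off the Levi factor $\prod_i\GL_{m_i}$ by Wedderburn--Malcev, with $U=1+J$ the closed connected normal unipotent part. No gaps; the separability hypothesis you flag is automatic here since $k$ is algebraically closed of characteristic zero.
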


We apply this result to the quiver $Q$ consisting of one node and two loops. The category $\Rep(Q)$ of representations of $Q$ is equivalent to the category of left modules over the path algebra of $Q$, which is the non-commutative algebra $kQ=k\braket{x,y}$. We need to consider the quiver with relations $(Q,I)$, where $I\subset kQ$ is the two-sided ideal generated by the single commutator $xy-yx$. Then the category of representations of $(Q,I)$ is a full subcategory of $\Rep(Q)$, naturally equivalent to the category of modules over $kQ/I=k[x,y]=A$.

Theorem \ref{thm:autquiver} implies in particular that, if $M$ is an $A$-module of finite length, its automorphism group is a \emph{special} algebraic group. Indeed, the $\GL$ factors are themselves special, every unipotent group in characteristic zero is an iterated extensions of copies of $\G_a$ (which is special), and any (semidirect) product of special groups is special. This fact is crucial for us: the Grothendieck ring $K_0(\St_k)$ can be characterized as the localization of $K_0(\Var_k)$ at the classes of special algebraic groups, and the upshot is that when a variety $Y$ is acted on by a special group $G$, the motivic class of the quotient stack $\mathscr Y=Y/G$ can be computed as
\[
\bigl[\mathscr Y\bigr]=\bigl[Y\bigr]\,\big/\,\bigl[G\bigr]\in K_0(\St_k).
\]

In Tables \ref{Tbl:mod3} and \ref{Tbl:aut4}, the column indicating the motive of the automorphism groups is obtained directly from Theorem~\ref{thm:autquiver}. Note that in order to compute this class, one only needs the indecomposable factors of the module, and the dimension of the automorphism group $\Aut_A(M)$. The latter is an elementary calculation and easily gives the number of copies of $\G_a$ appearing in the unipotent factor $U$.

As an example, consider the stack of coherent sheaves of length $n$ on affine space $\A^d$, supported at the origin. By Theorem~\ref{thm:autquiver}, its locally closed substack parametrizing structure sheaves has motivic class
\[
\frac{\bigl[\Hilb^n(\A^d)_0\bigr]}{\L^{n-1}(\L-1)}.
\]
Indeed, $\O_Z$ is indecomposable and $\Aut(\O_Z)$ has dimension $n$ for all fat points $Z\subset \A^d$, since an automorphism is determined by the image of $1\in \O_Z$. If $d=2$, this computes the class
\be\label{eqn:strshcontr}
\bigl[\mathcal X_1(n)\bigr]=\frac{\bigl[\Hilb^n(\A^2)_0\bigr]}{\L^{n-1}(\L-1)}.
\ee
To obtain the motive of the punctual Hilbert scheme, one can use the expansion
\be\label{eqn:punctstruct}
\sum_{n=0}^\infty\, \bigl[\Hilb^n(\A^2)_0\bigr]t^n=\prod_{m\geq 1}(1-\L^{m-1}t^m)^{-1}.
\ee
This follows directly from the motivic version of G\"{o}ttsche's formula \cite{Gott1}, as it is also explained in \cite{GLMps}.

\begin{example}
The curvilinear locus inside the punctual Hilbert scheme is known to be a dense open subset, fibred over $\P^1$ with fibre $\A^{n-2}$~\cite{Bria1}. Hence the motivic class $[\Hilb^n(\A^2)_0]$ always decomposes as $\L^{n-2}(\L+1)$ plus the class of the non-curvilinear locus. If $n=3$, the latter is just a single point corresponding to $A/\mathfrak m^2$, hence we get
\[
\bigl[\Hilb^3(\A^2)_0\bigr]=\L(\L+1)+1.
\]
For higher $n$, one has to expand~\eqref{eqn:punctstruct} in order to extract the motive of the punctual Hilbert scheme.
\end{example}

%%%%%%%%%%%%%%%%
\section{Modules of length three}
We have three strata $\mathcal X_r(3)\subset \mathcal C(3)_0$.
Structure sheaves correspond to $r=1$ and contribute
\[
\bigl[\mathcal X_1(3)\bigr]=\frac{\L(\L+1)+1}{\L^2(\L-1)}
\]
by formula \eqref{eqn:strshcontr}. We have $\mathcal X_3(3)=\textrm{B}\GL_3$, and we compute $\mathcal X_2(3)$ in the next proposition.

\begin{prop}\label{prop:l3-2}
The modules of length $3$ having $r=2$ are $(A/\mathfrak m^2)^\ast$ and those of the form $k\oplus \O_Z$, where $Z\subset \A^2$ is a subscheme of length $2$. The latter form a family isomorphic to $\P^1$.
\end{prop}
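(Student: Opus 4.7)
The plan is to encode a length-$3$ module $M$ with $r_M=2$ by a non-zero $2\times 2$ matrix of scalars and then classify isomorphism classes as orbits under a residual $\GL_2\times\G_m$-action. Choose a basis $v_1,v_2,v_3$ of $M$ as in Lemma~\ref{lemma:lemma1}, so $v_1,v_2$ generate $M$ and $v_3$ spans the line $\mathfrak m\cdot M$. First I would observe that $x\cdot v_3=y\cdot v_3=0$: since $\mathfrak m\cdot M=\langle v_3\rangle_k$ is $1$-dimensional, $x\cdot v_3=cv_3$ for some $c\in k$, and $c\neq 0$ would contradict the nilpotency of $\mathfrak m$ on the finite-length module $M$. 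Consequently $\mathfrak m^2\cdot M=0$, so the commutation $xy\cdot v_i=yx\cdot v_i$ is automatic on the generators. Writing $x\cdot v_i=a_iv_3$ and $y\cdot v_i=b_iv_3$, the module structure is determined by the non-zero matrix
\[
N=\begin{pmatrix}a_1 & a_2\\ b_1 & b_2\end{pmatrix}\in M_2(k)\setminus\set{0},
\]
non-zero because $v_3$ must lie in the image of the $k$-linear maps \eqref{puttanazza}.

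Next I would pin down the gauge freedom left in this description. A change of lift of a basis of $M/\mathfrak m\cdot M$ acts on the columns of $N$ by right multiplication by $\GL_2$, while rescaling $v_3$ by $\lambda\in\G_m$ rescales $N$ by $\lambda^{-1}$; adding a multiple of $v_3$ to $v_1$ or $v_2$ leaves $N$ unchanged since $xv_3=yv_3=0$. Thus isomorphism classes of such $M$ correspond bijectively to $\GL_2\times\G_m$-orbits on $M_2(k)\setminus\set{0}$, and the orbit structure splits according to $\rk N\in\set{1,2}$.

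In the rank-$2$ case, right multiplication by $\GL_2$ is transitive on invertible matrices, so there is a single orbit with representative $N=I$. The resulting multiplication table ($xv_1=v_3$, $yv_2=v_3$, all others zero) matches the one for $(A/\mathfrak m^2)^\ast$ displayed in Example~\ref{ex:distinguishedl3} under $v_1\leftrightarrow x^\ast,\,v_2\leftrightarrow y^\ast,\,v_3\leftrightarrow 1^\ast$, giving $M\cong (A/\mathfrak m^2)^\ast$. In the rank-$1$ case, any $N$ factors as $\binom{\alpha}{\beta}(c_1,c_2)$ with both factors non-zero; right $\GL_2$-action reduces to $(c_1,c_2)=(1,0)$, and after $\G_m$-rescaling the only invariant is the point $[\alpha:\beta]\in\P^1$. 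For this normal form $v_2$ is annihilated by $\mathfrak m$ and generates a trivial summand $k$, while $A\cdot v_1=A/(\beta x-\alpha y,\mathfrak m^2)=\O_Z$ for $Z\subset\A^2$ the length-$2$ fat point cut out by the linear form $\beta x-\alpha y$. As $[\alpha:\beta]$ ranges over $\P^1=\P(\mathfrak m/\mathfrak m^2)$ one hits every $Z\in\Hilb^2(\A^2)_0$ exactly once, and since $\O_Z$ is the unique length-$2$ indecomposable summand of $k\oplus\O_Z$, distinct $Z$'s produce non-isomorphic modules, yielding the stated $\P^1$-family.

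The main obstacle, really bookkeeping, is keeping the $\GL_2\times\G_m$-action and the three subspaces $\langle v_1,v_2\rangle_k$, $\mathfrak m\cdot M$, $M/\mathfrak m\cdot M$ straight, and in the rank-$1$ case confirming that the $\P^1$ of orbits is genuinely the Hilbert scheme $\Hilb^2(\A^2)_0$ rather than just an abstract $\P^1$; once that identification is in place, the remainder is routine linear algebra.
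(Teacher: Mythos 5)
Your argument is correct. It takes a mildly different route from the paper: the paper deduces Proposition \ref{prop:l3-2} by extracting the case $n=3$ from the explicit change-of-basis (multiplication-table) reduction in the proof of Proposition \ref{prop:n,n-1}, where the dichotomy is $b_2'=0$ (decomposable, giving $k\oplus\O_Z$) versus $b_2'\neq 0$ (the dual module $(A/\mathfrak m^2)^\ast$). You instead package the data as a single nonzero $2\times 2$ coefficient matrix $N$ and classify orbits under the residual gauge group $\GL_2\times\G_m$, stratified by $\rk N$ -- in effect a length-$3$ analogue of the paper's own Lemma \ref{equivrelation} and Proposition \ref{prop:indec2}, which are only deployed there for length $4$. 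Your verification of the gauge group (an isomorphism preserves $\mathfrak m\cdot M$, the off-diagonal block adding multiples of $v_3$ acts trivially since $xv_3=yv_3=0$) and of the two strata is sound: rank $2$ gives a single orbit matching the table of $(A/\mathfrak m^2)^\ast$ in Example \ref{ex:distinguishedl3}, and rank $1$ gives the invariant $[\alpha:\beta]\in\P(\mathfrak m/\mathfrak m^2)$ with $M\cong k\oplus A/(\beta x-\alpha y,\mathfrak m^2)$, distinctness following from Krull--Schmidt (or already from the orbit count). What the paper's route buys is economy -- Proposition \ref{prop:n,n-1} is proved anyway and covers all $n$ with $r=n-1$; what yours buys is a self-contained proof with the invariants (rank, image line) made explicit, an intrinsic identification of the $\P^1$ with $\Hilb^2(\A^2)_0=\P(\mathfrak m/\mathfrak m^2)$ rather than an abstract parameter line, and a description of the change-of-basis group that feeds directly into the automorphism-group and motivic bookkeeping done later.
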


\begin{proof}
This can be extracted from the proof of Proposition \ref{prop:n,n-1}. Indeed, the decomposable modules (necessarily of the form $k\oplus \O_Z$) occur when $b_2'=0$, and the only new module corresponds to the final table, which represents (independently upon the choice of $b_1$) the indecomposable module $(A/\mathfrak m^2)^\ast$.
\end{proof}

\medskip
This proves the part of Theorem \ref{thm:main} concerning $n=3$.
The skew Ferrers diagrams representing a length $3$ module are precisely

\medskip
\[
\tiny{\ydiagram{1,1,1}}\qquad\tiny{\ydiagram{3}}\qquad \tiny{\ydiagram{1,2}}
\qquad \tiny{\ydiagram{2,1+1}} \qquad \tiny{\ydiagram{1,1+2}}\qquad \tiny{\ydiagram{1,1+1,1+1}}\qquad \tiny{\ydiagram{1,1+1,2+1}}
\]

\medskip
\noindent
where the first $4$ are indecomposable.
The automorphism groups of all modules are computed through Theorem \ref{thm:autquiver}. See Table \ref{Tbl:mod3} for the complete list.

\begin{table}[ht]
\centering
\begin{tabular}{cccc}
 $r$ & $\mathcal C(3)_0$ &  $[\Aut_A(M)]$ & Motivic contribution \\
\toprule
$1$ & $\O_Z$ & $\L^2(\L-1)$ & $\L(\L+1)+1$ \\ \\
$2$ & $(A/\mathfrak m^2)^\ast$ & $\L^2(\L-1)$ & $1$ \\ \\
$2$ & $k\oplus \O_Z$ & $\L^3(\L-1)^2$ & $\L+1$ \\ \\ 
$3$ & $k^{\oplus 3}$ & $[\GL_3]$ & $1$ \\ 
\bottomrule
\end{tabular} 

\medskip
\caption{All $k[x,y]$-modules of length $3$ supported at the origin, along with the class of their automorphism group and their motivic contribution. The first two rows describe indecomposable modules.}\label{Tbl:mod3}
\end{table}

\begin{remark}
The coefficient of $t^3$ in \eqref{punctual0} can be computed to be
\[
\bigl[\mathcal C(3)_0\bigr]=\frac{1}{[\GL_3]}\left(\L^8+\L^7+\L^6-\L^5-\L^4\right).
\]
The sum of the motives in the rightmost column of Table \ref{Tbl:mod3}, each divided by the motive of the corresponding automorphism group, recovers precisely this class, confirming our calculation.
\end{remark}

%A direct computation shows that all the example in the table are non isomorphic. Notice that the decomposable cases (\ref{Eqn:3.2.infinity}), (\ref{Eqn:3.2.family}) corresponds to modules isomorphic to $k \oplus N$ with $N$ a structure sheaf of length $2$. This gives a family parametrized by $\P^1_k$ which leads to the motivic class $\L+1$.

%\subsection{Computation of the automorphisms groups}

%In the remaining part of this section we will compute explicitly all the automorphism groups.

\begin{comment}
All together the classes of the automorphism groups are summarized in the following
\begin{prop}
Let $M$ be a module of length three, then all the possible contributions together with their automorphism groups are listed in Table \ref{Tbl:aut3}.
All the contributions divide by their automorphism class add up to the coefficient of $t^3$ of $\mathsf C_0(t)$. 
\begin{table}[h]
\centering
\begin{tabular}{|l|l|l|l|}
\hline
 $r_M$ & Motivic class & Automorphisms & Reference \\ \hline
  $1$    & $\L^2+\L+1$  & $\G_m\oplus \G_a^2$               &  Structure sheaves  \\ \hline
  \multirow{2}{*}{$2$}    & $1$                & $ \G_m\oplus \G_a^2$            & (\ref{Eqn:3.2.distinguish}) \\ \cline{2-4}
       & $\L+1$             & $ \G_m^2\oplus \G_a^3$ & (\ref{Eqn:3.2.infinity}),(\ref{Eqn:3.2.family})  \\ \hline
 $3$    & $1$                & $ \GL_3$  &    \\ \hline
\end{tabular}
\caption{Automorphism groups of modules of length $3$}
\label{Tbl:aut3}
\end{table}
\end{prop}
\end{comment}

%%%%%%%%%%%%%%%
%%%%%%%%%%%%%%%
\section{Modules of length four}
We need to analyze the strata $\mathcal X_r(4)\subset \mathcal C(4)_0$ for $r=1$, $2$, $3$, $4$.
%The extreme cases $r=1$ and $r=4$ are easily solved. 
Expanding \eqref{eqn:punctstruct} we find
\[
\bigl[\Hilb^4(\A^2)_0\bigr]=\L^3+2\L^2+\L+1,
\]
and this determines the class of $\mathcal X_1(4)$ through \eqref{eqn:strshcontr}.
On the other hand, $\mathcal X_4(4)=\textrm{B}\GL_4$, and combining Propositions~\ref{prop:n,n-1} and \ref{prop:l3-2} with one another completely describes the stratum $\mathcal X_3(4)$ as follows.

\begin{prop}\label{cor:dec7183}
Let $M$ be a module of length $4$ with $r_M=3$. Then either $M\isom k\oplus (A/\mathfrak m^2)^\ast$, or $M\isom k^2\oplus \O_Z$, where $\O_Z$ is a structure sheaf of length $2$.
\end{prop}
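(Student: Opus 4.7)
The proposition follows by directly chaining the two previous results. The plan is to first apply Proposition \ref{prop:n,n-1} with $n=4$, which is applicable since the hypothesis $r_M = 3 = n-1$ matches. This immediately produces a decomposition
\[
M \isom k \oplus N,
\]
where $N$ is an $A$-module of length $3$ with $r_N = 2$.

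Next, I would invoke Proposition \ref{prop:l3-2}, which classifies exactly the modules $N$ satisfying these hypotheses: such an $N$ is either the distinguished indecomposable module $(A/\mathfrak m^2)^\ast$, or it is itself a decomposable module of the form $k \oplus \O_Z$ where $Z\subset \A^2$ has length $2$. Substituting each alternative into the previous display yields the two asserted isomorphism types, namely $k \oplus (A/\mathfrak m^2)^\ast$ in the first case and $k \oplus (k \oplus \O_Z) \isom k^2 \oplus \O_Z$ in the second.

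There is essentially no technical obstacle here, since both ingredients have already been established and the deduction is purely combinatorial. The only point worth flagging is that one should verify the two resulting families genuinely realize distinct isomorphism classes (which is clear, since $(A/\mathfrak m^2)^\ast$ is indecomposable and not a structure sheaf, so it cannot appear as a summand in any decomposition of $k^2\oplus\O_Z$ by Krull--Schmidt). This observation also confirms that no further cases can arise.
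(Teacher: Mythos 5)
Your argument is correct and is exactly the paper's intended proof: the paper states the proposition as an immediate consequence of combining Proposition \ref{prop:n,n-1} (with $n=4$, giving $M\isom k\oplus N$ with $N$ of length $3$ and $r_N=2$) with the classification of such $N$ in Proposition \ref{prop:l3-2}. Your closing Krull--Schmidt remark is a harmless extra observation, not needed for the statement itself.
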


The three skew Ferrers diagrams determined by Proposition~\ref{cor:dec7183} are depicted in Figure~\ref{fig:dec871} below.

\begin{figure}[!ht]
\begin{tikzpicture}[scale=0.75]
%\hspace*{2.8cm}
\draw (2,3) to (6,3);
\node[anchor=west, right] at (1.7,3)
   {\tiny{$\bullet$}};
\node[anchor=west, right] at (5.7,3)
   {\tiny{$\bullet$}};
\node[anchor=west, right] at (3.7,3.4)
   {$\P^1$};   
\node[anchor=west, right] at (1,2)
{\tiny{\ydiagram{1,1+1,2+2}}};
\node[anchor=west, right] at (5.3,1.8)
{\tiny{\ydiagram{1,1+1,2+1,2+1}}};
\node[anchor=west, right] at (9.3,2.2)
{\tiny{\ydiagram{1,1+2,2+1}}};
\end{tikzpicture}\caption{The decomposable modules of Proposition~\ref{cor:dec7183}. The $\P^1$ represents the family $k^2\oplus \O_Z$, whereas the isolated diagram represents $k\oplus (A/\mathfrak m^2)^\ast$.}\label{fig:dec871}
\end{figure}
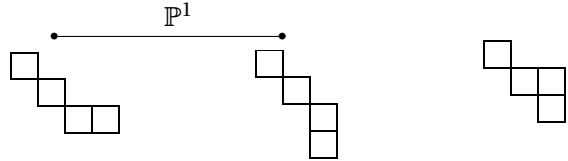

It remains to identify the stratum 
\[
\mathcal X_2(4)\subset \mathcal C(4)_0
\]
corresponding to modules with $\dim_k M/\mathfrak m\cdot M=2$.
These come in two types: either
\bitem
\item [(a)] $\mathfrak m\cdot M=\O_{Z}$, a structure sheaf of length $2$, or
\item [(b)] $\mathfrak m\cdot M=k\oplus k$.
\eitem
The families $\mathcal F_1$ and $\mathcal F_2$ of indecomposable modules mentioned in Theorem~\ref{thm:main} will arise from case (a) and (b), treated in Proposition \ref{prop:indec1} and Proposition \ref{prop:indec2} respectively.

% Before plunging into the computation, recall from Section~\ref{strategy} that our convention is to work with a $k$-linear basis
% \[
% \set{v_1,v_2,v_3,v_4}\subset M
% \]
% such that $v_1$ and $v_2$ generate $M$ as an $A$-module, and $v_3$ and $v_4$ are $k$-linear generators of the submodule $\mathfrak m\cdot M\subset M$. Then Lemma~\ref{lemma:lemma1} shows that multiplication by $x$ and $y$ maps the $k$-linear span $\langle v_1,v_2\rangle_k$ into $\mathfrak m\cdot M$. Since $\mathfrak m\cdot M$ has length $2$, these maps can be seen as $2\times 2$ matrices. A pair of matrices
% \[
% A_x,A_y\in M_2(k)
% \]
% determine an $A$-module in the stratum $\mathcal X_2(4)$ as soon as one imposes the relations 
According to \eqref{puttanazza}, we need to understand the space of pairs of $k$-linear maps
\begin{equation*}
\begin{tikzcd}
\langle v_1,v_2\rangle_k
\arrow[r, shift left]{}{A_x}
\arrow[r, shift right]{}[swap]{A_y}
& \mathfrak m\cdot M
\end{tikzcd}
\textrm{ satisfying }x\cdot(A_y v_i)=y\cdot (A_x v_i)  \,\textrm{ for } i=1,2.
\end{equation*}
Here $A_x$ and $A_y$ are two by two matrices corresponding to multiplication by $x$ and $y$ restricted to the $A$-linear generators $v_1$ and $v_2$. We need to consider the above data up to the equivalence relation that identifies pairs of matrices that give rise to isomorphic modules. This equivalence relation is determined in Lemma \ref{equivrelation} below in the case where $\mathfrak m\cdot M=k\oplus k$.

\medskip
Case (a) above is completely solved by the following result.

\begin{prop}\label{prop:indec1}
The decomposable modules with $r=2$ and such that $\mathfrak m\cdot M$ is a structure sheaf of length $2$ form an $\A^1$-fibration over $\P^1$, hence with motivic class $\L(\L+1)$. The indecomposable ones form a family $\mathcal F_1$ isomorphic to $\P^1$.
\end{prop}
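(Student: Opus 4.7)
The plan is to work with a basis of $M$ as in Lemma~\ref{lemma:lemma1} (with $v_1, v_2$ lifting a basis of $M/\mathfrak m M$ and $v_3, v_4$ a basis of $\mathfrak m M \isom \mathcal O_Z$), and to treat the decomposable and indecomposable parts of the stratum separately. A decomposable $M$ with $r_M = 2$ must split as $\mathcal O_{Z_1} \oplus \mathcal O_{Z_2}$ of cyclic modules whose lengths sum to $4$; cyclicity of $\mathfrak m M = \mathfrak m \mathcal O_{Z_1} \oplus \mathfrak m \mathcal O_{Z_2}$ of length $2$ forces the length partition to be $(1, 3)$ with the length-$3$ piece curvilinear (for a non-curvilinear $Z$ of length $3$, $\mathfrak m \mathcal O_Z \isom k^{\oplus 2}$ fails to be cyclic; for length-$2$ pieces on both sides, $\mathfrak m M \isom k^{\oplus 2}$ also fails). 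Hence the decomposable locus coincides with the curvilinear part of $\Hilb^3(\A^2)_0$, which by the example preceding Section~4 is an $\A^1$-bundle over $\P^1$ with motivic class $\L(\L+1)$.

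For the indecomposable locus, I fiber the moduli over the $\P^1$ of possible $\mathfrak m M = \mathcal O_Z$'s (parametrized by tangent direction at the origin) and compute the fiber over a fixed $Z = V(y, x^2)$. A basis $v_3, v_4$ of $\mathfrak m M$ may then be chosen with $xv_3 = v_4$ and $yv_3 = xv_4 = yv_4 = 0$. Writing $xv_i = a_{1i}v_3 + a_{2i}v_4$ and $yv_i = b_{1i}v_3 + b_{2i}v_4$ for $i = 1, 2$, the commutativity $xyv_i = yxv_i = 0$ combined with $y$ annihilating $\mathfrak m M$ forces $b_{1i} = 0$, and minimality of $v_1, v_2$ requires $(a_{11}, a_{12}) \ne (0, 0)$. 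I then normalize: a $P \in \GL_2$ on $\langle v_1, v_2\rangle_k$ brings $(a_{11}, a_{12})$ to $(1, 0)$, and shifts $v_i \mapsto v_i + \lambda_i v_3$ kill the second row of $A_x$. After this, the residual equivalence (the stabilizer of the normal form, combined with the $\Aut(\mathcal O_Z)$-action $v_3 \mapsto \alpha v_3 + \beta v_4$, $v_4 \mapsto \alpha v_4$) acts on the remaining pair $(b_{12}, b_{22})$ by
\[
(b_{12}, b_{22}) \longmapsto (b_{12} + g\, b_{22}, \; h\, b_{22}), \qquad g \in k,\; h \in \G_m.
\]

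This action has exactly two orbits: the open one $b_{22} \ne 0$ collapses to a single point (scale $b_{22}$ to $1$, then shift $b_{12}$ to $0$), and the closed one $b_{22} = 0$ is an $\A^1$ parametrized by $b_{12}$. In the closed orbit one has $xv_2 = yv_2 = 0$, so $M = \langle v_2 \rangle \oplus A v_1 \isom k \oplus \mathcal O_{Z'}$ with $\mathcal O_{Z'} \isom A/(y - b_{12} x^2, x^3)$ curvilinear, recovering the decomposable classification above. In the open orbit, any candidate $w = \alpha v_1 + \beta v_2 + (\text{element of }\mathfrak m M)$ with $xw = yw = 0$ forces $\alpha = 0$ (since $xw = \alpha v_3 + \ldots$) and then $\beta = 0$ (since $yw = \beta v_4 + \ldots$), so $M$ admits no $k$-summand and hence, by the length-partition analysis above, no nontrivial direct sum decomposition at all. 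Assembling over the $\P^1$ of tangent directions for $Z$ then yields $\mathcal F_1 \isom \P^1$. The principal obstacle is the bookkeeping of the residual equivalence: one has to carefully interleave the $\GL_2$-action on the generators, the shifts $v_i \mapsto v_i + c v_3 + d v_4$, and the $\Aut(\mathcal O_Z)$-action to obtain the clean action on $(b_{12}, b_{22})$ displayed above.
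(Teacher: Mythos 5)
Your argument is correct, and its computational core is the same normalization the paper performs: fix $\mathfrak m\cdot M\isom A/(x^2,y-tx)$, record the action of $x,y$ on the two generators as a pair of matrices, use commutativity to kill the $v_3$-coefficients of $A_y$, normalize $A_x$, and analyze the residual equivalence acting on the two surviving parameters by $(\beta_1,\beta_2)\mapsto(\beta_1+g\beta_2,\,h\beta_2)$, giving one open orbit (the indecomposable module) and an $\A^1$ of closed orbits (the decomposables); the paper simply carries the parameter $t$ along instead of fixing $t=0$ and restoring it at the end by symmetry. Where you genuinely diverge is the decomposable locus: the paper reads it off as the $c_2'=0$ branch of the same normal form, whereas you classify it a priori (decomposable with $r_M=2$ forces a sum of two cyclic modules, and the hypothesis that $\mathfrak m\cdot M$ is a length-$2$ structure sheaf excludes the partition $(2,2)$ and the summand $A/\mathfrak m^2$), identifying it with the curvilinear locus of $\Hilb^3(\A^2)_0$ and importing its known $\A^1$-fibration over $\P^1$; this makes it transparent that distinct parameters give non-isomorphic modules, at the cost of quoting Brian\c{c}on's description rather than exhibiting the fibration directly. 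You also supply a detail the paper leaves implicit, namely the socle computation showing the open-orbit module has no $k$-summand and hence, by your splitting analysis, is indecomposable. Two blemishes, neither a gap: your indices slip (with the convention $y\cdot v_i=b_{1i}v_3+b_{2i}v_4$ the surviving pair is $(b_{21},b_{22})$, not $(b_{12},b_{22})$, though your later formulas use the intended coefficients consistently), and the final ``assembling over the $\P^1$ of tangent directions'' deserves a sentence noting that the computation for $Z=V(y,x^2)$ applies to every $Z$ after a linear change of coordinates, and that modules over distinct $Z$ are never isomorphic since $\mathfrak m\cdot M$ recovers $Z$.
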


\begin{proof}
If $M$ is generated as a $k$-vector space by $\{ v_1,v_2,v_3,v_4 \}$, we can assume the $k$-linear generators of $\mathfrak m\cdot M=A/(x^2,y-tx)$ to be $v_3=1$ and $v_4=x$. Here we have fixed $t\in\A^1=\P^1\setminus \{\infty\}$; then, after imposing the relations $x\cdot(y \cdot v_i) = y\cdot(x \cdot v_i)$ for $i=1,2$, the multiplication table for $M$ is
\[
\begin{tabular}{ccccc}
 & $v_1$  & $v_2$  & $v_3$ & $v_4$\\ 
\toprule
 $x\cdot$  & $a_1v_3+b_1v_4$ & $a_2v_3+b_2v_4$ & $v_4$  & $0$ \\
 $y \cdot$ & $a_1tv_3+c_1v_4$ & $a_2tv_3+c_2v_4$ & $tv_4$ & $0$ \\
\bottomrule
\end{tabular}
\]

\medskip
\noindent
and since $M$ is generated as an $A$-module by $v_1$ and $v_2$, we can assume $a_1=1$. Then, we may assume $a_2=0$ by replacing $v_2$ with $=v_2-a_2v_1$. Since $x \cdot v_3=v_4$, we can assume $b_1=b_2=0$ by replacing $v_1$ and $v_2$ with $v_1-b_1v_3$ and $v_2-b_2v_3$ respectively. This yields the multiplication table
\[
\begin{tabular}{ccccc}
 & $v_1$  & $v_2$  & $v_3$ & $v_4$\\ 
\toprule
 $x\cdot$  & $v_3$ & $0$ & $v_4$  & $0$ \\
 $y \cdot$ & $tv_3+zv_4$ & $c_2'v_4$ & $tv_4$  & $0$ \\
\bottomrule
\end{tabular}
\]

\medskip
\noindent
where $z$ and $c_2'$ arise from the above changes of basis. Here we distinguish between two cases: either $c_2'=0$ or $c_2'\neq 0$. 
In the former case we obtain, for each $t \in \A^1$, a family of decomposable modules parametrized by $z \in \A^1$. 
This family extends to the whole $\P^1$ of double points, so that the full family is an $\A^1$-fibration over $\P^1$.

On the other hand, if $c_2'\neq 0$, we may assume $c_2'=1$. Replacing $v_1$ by $v_1-zv_2$, we may also assume $z=0$, so for each $t \in \A^1$ we have exactly one indecomposable module: its multiplication table is
\begin{equation} \label{Eqn:4.2.Struct.Indec}
\begin{tabular}{ccccc}
 & $v_1$  & $v_2$  & $v_3$ & $v_4$\\ 
\toprule
 $x\cdot$  & $v_3$ & $0$ & $v_4$  & $0$ \\
 $y \cdot$ & $tv_3$ & $v_4$ & $tv_4$ & $0$ \\
\bottomrule
\end{tabular}
\end{equation}
and this family again extends to $t=\infty$ giving a family $\mathcal F_1\isom\P^1$.
\end{proof}

\medskip

\begin{figure}[!ht]
\begin{tikzpicture}[scale=0.75]
\node[anchor=west] at (1.7,3.4)
   {$0$};
\node[anchor=west] at (5.6,3.3)
   {$\infty$};
\draw (2,3) to (6,3);
\node[anchor=west, right] at (1.7,3)
   {\tiny{$\bullet$}};
\node[anchor=west, right] at (5.7,3)
   {\tiny{$\bullet$}};
\node[anchor=west, right] at (3.7,3.4)
   {$\P^1$};   
\node[anchor=west, right] at (1,2)
{\tiny{\ydiagram{3,3+1}}};
\node[anchor=west, right] at (5.3,1.8)
{\tiny{\ydiagram{1,1+1,1+1,1+1}}};
%\hspace*{5.2cm}
\node[anchor=west] at (8.7,3.4)
   {$0$};
\node[anchor=west] at (12.6,3.3)
   {$\infty$};
\draw (9,3) to (13,3);
\node[anchor=west, right] at (8.7,3)
   {\tiny{$\bullet$}};
\node[anchor=west, right] at (12.7,3)
   {\tiny{$\bullet$}};
\node[anchor=west, right] at (10.7,3.4)
   {$\mathcal F_1$};   
\node[anchor=west, right] at (8,2)
{\tiny{\ydiagram{3,2+1}}};
\node[anchor=west, right] at (12.3,1.8)
{\tiny{\ydiagram{2,1+1,1+1}}};
\end{tikzpicture} \caption{The left picture represents the $z=0$ slice of the family of decomposable modules of Proposition \ref{prop:indec1}. The right picture is the family of indecomposable modules given by \eqref{Eqn:4.2.Struct.Indec}.}\label{fig:jhgwjhgwf}
\end{figure}
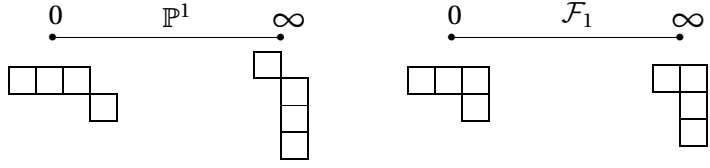

\begin{remark}
The decomposable modules of Proposition~\ref{prop:indec1} are those of the form $k\oplus \O_Z$, where $Z\subset \A^2$ is a \emph{curvilinear} subscheme of length $3$. The motive $\L(\L+1)$ is indeed the class of the curvilinear locus inside $\Hilb^3(\A^2)_0$. The family $\mathcal F_1$ parametrizes the $k$-linear duals $(A/I)^\ast$, where $I\subset k[x,y]$ is a non-complete-intersection ideal. This family of subschemes of the plane, parametrized by the $\P^1$ of linear forms on $k^2$, was studied by Brian\c{c}on \cite{Bria1}.
\end{remark}

It remains to treat case (b), so from now on we assume our modules $M$ satisfy
\[
\mathfrak m\cdot M = k\oplus k.
\]
This assumption makes the relations $x\cdot (y\cdot v_i)=y\cdot (x\cdot v_i)$ vacuous. We thus want to describe the quasi-affine variety
\be\label{kmaps}
U=\Set{k\textrm{-linear maps }
A_x,A_y:\langle v_1, v_2 \rangle_k\rightrightarrows k\oplus k|\rk(A_x\,A_y)=2}
\ee
up to a suitable group action. Here $(A_x A_y)$ is the $2\times 4$ matrix obtained by juxtaposing the two given square matrices. The rank condition comes from the requirement $r=2$. The next result characterizes pairs of matrices producing isomorphic modules.

\begin{lemma}\label{equivrelation}
Two pairs of matrices $(A_x, A_y)$ and $(B_x, B_y)$ as above give rise to isomorphic $A$-modules if and only if 
$$H A_x K = B_x, \qquad H A_y K = B_y$$
for some $H$, $K\in\GL_2$.
\end{lemma}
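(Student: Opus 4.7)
The key observation underlying this lemma is that in case (b) we have $\mathfrak m^2\cdot M=\mathfrak m\cdot (k\oplus k)=0$, so multiplication by $x$ and $y$ annihilates $\mathfrak m\cdot M$. Consequently the entire $A$-module structure on $M$ is recovered from the pair of $k$-linear maps $A_x,A_y\colon \langle v_1,v_2\rangle_k\to \mathfrak m\cdot M$, and choices of bases on source and target are the only freedom. The plan is therefore to show that isomorphisms of $A$-modules correspond bijectively to pairs $(H,K)\in\GL_2\times\GL_2$ acting on $A_x,A_y$ by left and right multiplication.

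For the $(\Rightarrow)$ direction, suppose $\phi\colon M\xrightarrow{\sim} M'$ is an $A$-linear isomorphism. Since $\phi$ respects the action of $\mathfrak m$, it restricts to a $k$-linear isomorphism $\phi|_{\mathfrak m\cdot M}\colon \mathfrak m\cdot M\xrightarrow{\sim}\mathfrak m\cdot M'$, which in the bases $v_3,v_4$ and $v_3',v_4'$ is represented by a matrix $H\in\GL_2$ via $\phi(v_{j+2})=\sum_i H_{ij}v_{i+2}'$. On the other hand, $\phi$ descends to an isomorphism $M/\mathfrak m\cdot M\xrightarrow{\sim}M'/\mathfrak m\cdot M'$, and lifting back via the chosen splitting one writes $\phi(v_j)=\sum_i L_{ij}v_i'+p_j$ with $L\in\GL_2$ and $p_j\in\mathfrak m\cdot M'$ for $j=1,2$.

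Now compute $\phi(x\cdot v_j)$ in two ways. Using $x\cdot v_j=\sum_k (A_x)_{kj}v_{k+2}$ and applying $\phi$ one obtains $\sum_i (HA_x)_{ij}v_{i+2}'$. Conversely, $x\cdot\phi(v_j)=\sum_i L_{ij}\,x\cdot v_i'+x\cdot p_j$; the second term vanishes because $x$ kills $\mathfrak m\cdot M'$, and the first equals $\sum_i (B_xL)_{ij}v_{i+2}'$. Equating gives $B_xL=HA_x$, hence $B_x=HA_xL^{-1}$; the identical argument with $y$ yields $B_y=HA_yL^{-1}$. Setting $K=L^{-1}$ delivers the claimed relations.

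For the $(\Leftarrow)$ direction, given $H,K\in\GL_2$ satisfying $HA_xK=B_x$ and $HA_yK=B_y$, define a $k$-linear map $\phi\colon M\to M'$ on the basis by $\phi(v_j)=\sum_i (K^{-1})_{ij}v_i'$ for $j=1,2$ and $\phi(v_{j+2})=\sum_i H_{ij}v_{i+2}'$ for $j=1,2$. This is a $k$-linear isomorphism by construction. To check $A$-linearity, the relations $x\cdot v_{j+2}=y\cdot v_{j+2}=0$ in both $M$ and $M'$ handle the case $j=3,4$ trivially, and for $j=1,2$ the verification $\phi(x\cdot v_j)=x\cdot\phi(v_j)$ reduces, after expanding both sides in the basis $v_3',v_4'$, exactly to the equality $HA_xK=B_x$; likewise for $y$. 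The main (very mild) obstacle is purely notational: one must keep track of which matrix acts on which side, since the naive identification of the induced maps on $W$ and on $\mathfrak m\cdot M$ produces $L^{-1}$ rather than $L$ in the statement. Once that inversion is absorbed, both implications are immediate from the vanishing $\mathfrak m^2\cdot M=0$.
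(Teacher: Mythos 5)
Your proof is correct and follows essentially the same route as the paper: write an isomorphism in block form with respect to the splitting into generators and $\mathfrak m\cdot M$, observe that the cross term is irrelevant (your $x\cdot p_j=0$, coming from $\mathfrak m^2\cdot M=0$), and read off $B_x=HA_xK$, $B_y=HA_yK$ by direct computation. If anything, you handle the off-diagonal block and the converse implication slightly more carefully than the paper's sketch, but the underlying argument is the same.
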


\begin{proof}
An isomorphism of modules can be identified with a matrix in $\GL_4$, which we write in block form as 
$$\begin{pmatrix}
W & X \\
Y & Z
\end{pmatrix}.$$
The matrix $X$ describes a mapping $\langle v_3, v_4 \rangle_k \to \langle v_1, v_2 \rangle_k$. Since the submodule generated by $v_3$ and $v_4$ is $k^2$, then $x \cdot v_3=y \cdot v_3 = x \cdot v_4 = y \cdot v_4=0$. Then, the matrix $X$ contributes trivially to the isomorphism, so we can assume $X$ is the zero matrix. On the other hand, the matrix $Y$ describes a component mapping $\langle v_1, v_2 \rangle_k \to \langle v_3, v_4 \rangle_k$. Since we are assuming $v_1$ and $v_2$ to be generators, $Y$ must be the zero matrix. It follows than $W$ and $Z$ belong to $\GL_2$. A direct computation shows that $K=W$ and $H=Z^{-1}$.
\end{proof}

\medskip
In what follows, we study the quotient stack
\[
U/\GL_2\times \GL_2
\]
which gives a presentation of the substack of $\mathcal X_2(4)$ representing modules of type (b).

\begin{prop}\label{prop:indec2}
The indecomposable modules of length $4$ with $\mathfrak m\cdot M=k\oplus k$ form a family $\mathcal F_2$ isomorphic to $\P^1$.
\end{prop}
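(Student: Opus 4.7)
The plan is to use Lemma~\ref{equivrelation} to reduce the classification of modules to orbits of pairs $(A_x, A_y) \in U$ under the $(H, K)$-action of $\GL_2 \times \GL_2$, then to stratify by the ranks of $A_x$ and $A_y$ and identify the indecomposable orbits.

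First I would treat the case where $A_y$ is invertible: acting by $H = A_y^{-1}$ and $K = I$ normalizes $A_y = I$, leaving the residual stabilizer $\{(H, H^{-1}) : H \in \GL_2\}$, which acts on $A_x$ by conjugation. Since $k$ is algebraically closed, the conjugacy classes in $M_2(k)$ are $\mathrm{diag}(\lambda, \mu)$ with $\lambda \neq \mu$, a scalar $\lambda I$, and Jordan blocks $J_\lambda = \begin{pmatrix}\lambda & 1 \\ 0 & \lambda\end{pmatrix}$; I expect the first two classes to give decomposable modules (direct sums of two length-$2$ structure sheaves), and the Jordan block to give a distinct indecomposable $M_\lambda$ for each $\lambda \in k$. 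Symmetrically, if $A_x$ is invertible but $A_y$ is not, I normalize $A_x = I$ and pick out the one new indecomposable orbit $(I, J_0)$, which I call $M_\infty$; no $(H, K)$ can relate this to any $M_\lambda$ since $J_0$ is not invertible. Finally, when neither matrix is invertible, the rank condition~\eqref{kmaps} forces both to have rank $1$, and a short analysis of the pencil $sA_x + tA_y$ shows these pairs always produce decomposable modules: either $k \oplus A/\mathfrak m^2$ (common kernel) or $A/(y, x^2) \oplus A/(x, y^2)$ (no common kernel or image).

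For indecomposability of each $M_\lambda$ and of $M_\infty$, the uniform argument I plan to use is that the submodule $A v_2$ is always isomorphic to the cyclic module $A/\mathfrak m^2$ of length $3$, and contains the entire socle $\mathfrak m\cdot M$. A putative $k$-direct summand would have to lie in the socle and hence in its own complement, a contradiction; a decomposition into two length-$2$ structure sheaves is likewise ruled out since no such sum contains a cyclic submodule of length $3$. To identify $\mathcal F_2$ with $\P^1$, I would use the map $M \mapsto A v_1 \in \Hilb^2(\A^2)_0 \cong \P^1$: for $M_\lambda$ the submodule has tangent direction $x - \lambda y$, while $M_\infty$ corresponds to the direction $y$, together sweeping out the full $\P^1$ of tangent directions at the origin.

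The main obstacle is ensuring the case analysis is exhaustive and that the orbits $(J_0, I)$ and $(I, J_0)$ are recognized as non-isomorphic modules rather than conflated by an $x \leftrightarrow y$ symmetry: the former is the $\lambda = 0$ member of the first case (tangent direction $x$), and the latter is the separate $M_\infty$ (tangent direction $y$). A dimensional sanity check: each indecomposable orbit has dimension $6$ inside the $8$-dimensional $U$, so the indecomposable locus has dimension $7$, consistent with a $\P^1$-family of $6$-dimensional orbits.
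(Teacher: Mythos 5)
Your reduction to $\GL_2\times\GL_2$-orbits via Lemma~\ref{equivrelation}, the normalization of one matrix to the identity with the residual conjugation action on the other, and the resulting $\A^1$ of Jordan-block modules completed by one extra point $M_\infty$ is exactly the paper's argument (with the roles of $A_x$ and $A_y$ swapped); your treatment of the rank-one case and the identification of $\mathcal F_2$ with the $\P^1$ of tangent directions are also consistent with it. However, your indecomposability argument contains a genuine gap: the claim that a direct sum of two length-$2$ structure sheaves contains no cyclic submodule of length $3$ is false. Take $N=A/(y,x^2)\oplus A/(x,y^2)$, i.e.\ the module \eqref{prendilotutto}, which lives in this very stratum (it is the leftmost diagram of Figure~\ref{fig:jhgwjhgwf2}): the element $m=(1,1)$ has annihilator $(y,x^2)\cap(x,y^2)=\mathfrak m^2$, so $A\cdot m\cong A/\mathfrak m^2$ is cyclic of length $3$, and it even contains the whole socle $\mathfrak m\cdot N=\langle (x,0),(0,y)\rangle$. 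More generally $\O_Z\oplus\O_{Z'}$ with $Z\neq Z'$ always contains such a submodule, since $I_Z\cap I_{Z'}$ has colength $3$. So your uniform criterion (``$Av_2\cong A/\mathfrak m^2$ containing the socle'') cannot distinguish $M_\lambda$ or $M_\infty$ from these decomposable modules, and the exclusion of a splitting into two length-$2$ structure sheaves is not established as written. (The exclusion of a $k$-summand is fine, granted the easy check that the socle equals $\mathfrak m\cdot M$ for these modules.)

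The gap is repairable within your own setup in either of two ways. Cheapest: your orbit analysis is complete and orbits are disjoint, and every decomposable module of the stratum (namely $k\oplus A/\mathfrak m^2$ and the various $\O_Z\oplus\O_{Z'}$) occurs among the normal forms you have already identified as decomposable; since a Jordan-block pair is not in any of those orbits (the rank profile and the eigenvalue/Jordan type of the pair are orbit invariants), the $M_\lambda$ and $M_\infty$ must be indecomposable. Alternatively, argue directly: for $M_\lambda=(J_\lambda,I)$ an element $m=av_1+bv_2$ (modulo the socle) generates a submodule of length $\leq 2$ iff $x\cdot m$ and $y\cdot m$ are proportional, and the relevant $2\times 2$ determinant equals $b^2$; hence every cyclic length-$2$ submodule not contained in $\mathfrak m\cdot M$ contains $v_3$, so no two of them can be complementary, ruling out a $2+2$ decomposition (and likewise for $M_\infty$, where the determinant is $-b^2$). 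With one of these fixes, your proof is correct and essentially the paper's.
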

\begin{proof}
The condition $\rk(A_x\,A_y)=2$ from \eqref{kmaps} says that either
\bitem
\item [$(1)$] $\rk A_x=\rk A_y=1$, or
\item [$(2)$] either $A_x$ or $A_y$ is invertible.
\eitem
In the first case, up to the action of $\GL_2\times \GL_2$ we can assume
$$A_x=\begin{pmatrix} 1 & 0 \\ 0 & 0 \end{pmatrix}.$$
At this point the  multiplication table of a module with $\mathfrak m\cdot M=k\oplus k$ looks like
\[
\begin{tabular}{ccccc}
 & $v_1$  & $v_2$  & $v_3$ & $v_4$\\ 
\toprule
 $x \cdot$ & $v_3$ & $0$ & $0$ & $0$ \\
 $y\cdot$ & $\alpha v_3+\beta v_4$ & $\gamma v_3 + \delta v_4$ & $0$ & $0$ \\
\bottomrule
\end{tabular}
\]

\medskip
\noindent
and the conditions $\rk A_x=\rk A_y=1$ and $\rk(A_x\,A_y)=2$ ensure that $\alpha \delta = \beta \gamma$ and $(\beta, \delta) \neq (0,0)$, respectively. Assume $\delta \neq 0$. Then by replacing $v_4$ with $ \gamma v_3 + \delta v_4$ we can assume $\gamma=0$, $\delta=1$ and therefore $\alpha=0$. Further replacing $v_1$ by $v_1-\beta b_2$ yields the table
\begin{equation} \label{Eqn:4.2.k2.X1Y1a}
\begin{tabular}{ccccc}
 & $v_1$  & $v_2$  & $v_3$ & $v_4$\\ 
\toprule
 $x \cdot$ & $v_3$ & $0$ & $0$  & $0$ \\
 $y\cdot$  & $0$ & $v_4$ & $0$  & $0$ \\
\bottomrule
 \end{tabular}
\end{equation}

\medskip
\noindent
which is a single module.
Similarly, if $\beta \neq 0$, replacing $v_4$ by $\alpha v_3 + \beta v_4$ we can assume $\alpha=0$, $\beta=1$ and therefore $\gamma=0$. Further replacing $v_2$ by $v_2-\delta b_1$ we get
\begin{equation} \label{Eqn:4.2.k2.X1Y1b}
\begin{tabular}{ccccc}
 & $v_1$  & $v_2$  & $v_3$ & $v_4$\\ 
\toprule
 $x \cdot$ & $v_3$ & $0$ & $0$  & $0$ \\
 $y\cdot$  & $v_4$ & $0$ & $0$  & $0$ \\
 \bottomrule
 \end{tabular}
\end{equation}

\medskip
\noindent
showing that case $(1)$ contributes only two isomorphism classes of modules, both decomposable and representable by skew Ferrers diagrams (see Figure~\ref{fig:jhgwjhgwf2}). 

We are left to deal with the loci in \eqref{kmaps} where $A_x$ is invertible and the one where $A_y$ is invertible. These are isomorphic along their common intersection. 
If, say, $A_x$ is invertible, by the action of $\GL_2\times \GL_2$ described in Lemma \ref{equivrelation}, we may assume $A_x$ is the identity matrix and $A_y$ is in Jordan form. If $A_y$ is not diagonalizable and has eigenvalue $\eta\in\A^1$, we get
\[
A_x=
\begin{pmatrix}
1 & 0\\
0 & 1
\end{pmatrix},\qquad A_y=
\begin{pmatrix}
\eta & 1\\
0 & \eta
\end{pmatrix}.
\]
Joining this family with the module represented by the pair 
\[
A_x=
\begin{pmatrix}
0 & 1\\
0 & 0
\end{pmatrix},\qquad 
A_y=
\begin{pmatrix}
1 & 0\\
0 & 1
\end{pmatrix}
\]
gives a family of indecomposable modules $\mathcal F_2$ parametrized by $\P^1$. This exhausts the non diagonalizable case, and all other modules are decomposable: for instance, if $A_x$ is invertible and $A_y$ is diagonalizable with eigenvalues $(\lambda,\mu)$, we obtain the module
\begin{equation*} 
\begin{array}{ccccc}
        &v_1  &v_2  &v_3   & v_4\\ \toprule
 x \cdot & v_3 & v_4 & 0  & 0 \\
 y\cdot  & \lambda v_3 & \mu v_4 & 0  & 0 \\
 \bottomrule
 \end{array}
\end{equation*}

\medskip
\noindent
which is the direct sum of two structure sheaves of length $2$. %This case glues to the case where $A_y$ is invertible and $A_x$ is diagonalizable, along the common intersection. 
\end{proof}

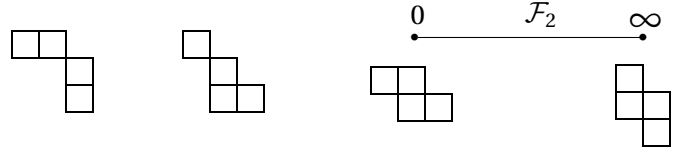
\begin{figure}[!ht]
\begin{tikzpicture}[scale=0.75]
\node[anchor=west] at (1.7,2.4)
   {\tiny{\ydiagram{2,2+1,2+1}}};
\node[anchor=west] at (4.7,2.4)   
   {\tiny{\ydiagram{1,1+1,1+2}}};
%\hspace*{5.5cm}
\node[anchor=west] at (8.7,3.4)
   {$0$};
\node[anchor=west] at (12.5,3.3)
   {$\infty$};
\draw (9,3) to (13,3);
\node[anchor=west, right] at (8.7,3)
   {\tiny{$\bullet$}};
\node[anchor=west, right] at (12.7,3)
   {\tiny{$\bullet$}};
\node[anchor=west, right] at (10.7,3.4)
   {$\mathcal F_2$};   
\node[anchor=west, right] at (8,2)
{\tiny{\ydiagram{2,1+2}}};
\node[anchor=west, right] at (12.3,1.8)
{\tiny{\ydiagram{1,2,1+1}}};
\end{tikzpicture} \caption{The left picture represents the two isolated modules ~\eqref{Eqn:4.2.k2.X1Y1a} and~\eqref{Eqn:4.2.k2.X1Y1b}. The right picture describes the family $\mathcal F_2=\P^1$ of indecomposable modules found in Proposition \ref{prop:indec2}.}\label{fig:jhgwjhgwf2}
\end{figure}

Theorem \ref{thm:main} follows combining Proposition \ref{prop:indec1} and Proposition \ref{prop:indec2} with one another.

\medskip
For completeness, let us finish the classification of decomposable modules. By the proof of the previous proposition, we are left to consider the locus, in $U$, where $A_x$ is invertible and $A_y$ is diagonalizable; this glues to the locus where $A_y$ is invertible and $A_x$ is diagonalizable. Up to the action of $\GL_2\times \GL_2$, when one between $A_x$ and $A_y$ is invertible, we can always assume the other to be in Jordan normal form.

Joining the locus where $A_x$ is invertible and $A_y$ has two equal eigenvalues with the module represented by the pair
\[
A_x=
\begin{pmatrix}
0 & 0\\
0 & 0
\end{pmatrix},\qquad 
A_y=
\begin{pmatrix}
1 & 0\\
0 & 1
\end{pmatrix}
\]
gives a family of decomposable modules
\[
\begin{tikzpicture}[scale=0.75]
\draw (2,3) to (6,3);
\node[anchor=west, right] at (1.7,3)
   {\tiny{$\bullet$}};
\node[anchor=west, right] at (5.7,3)
   {\tiny{$\bullet$}};
\node[anchor=west, right] at (3.7,3.4)
   {$\P^1$};   
\node[anchor=west, right] at (1,2)
	{\tiny{\ydiagram{2,2+2}}};
\node[anchor=west, right] at (5.3,1.8)
	{\tiny{\ydiagram{1,1,1+1,1+1}}};
\end{tikzpicture} 
\]
representing all length $4$ modules of the form $\O_Z\oplus \O_Z$, where $Z\subset \A^2$ is a subscheme of length $2$. These have automorphism group
\[
\G_a^4\rtimes \GL_2,
\]
which distinguishes them from the decomposables of the form $\O_Z\oplus\O_{Z'}$ with $Z\neq Z'$ two subschemes of length $2$. The latter indeed have automorphism group
\[
\G_a^4\rtimes \G_m^2.
\]
We have already encountered a module of this type, namely 
\be\label{prendilotutto}
A/(y,x^2)\oplus A/(x,y^2),
\ee
in the leftmost diagram of Figure \ref{fig:jhgwjhgwf2}. The other sums of (different) structure sheaves of length $2$ arise by considering the remaining types of pairs $(A_x,A_y)$ up to $\GL_2\times \GL_2$. More precisely, we have the locus where $A_x$ is invertible and $A_y$ has distinct eigenvalues $\lambda\neq \mu$, and finally the $\G_m$ of modules represented by matrices
\be\label{mangiacazzi}
A_x=
\begin{pmatrix}
0 & 0\\
0 & \nu
\end{pmatrix},
\qquad 
A_y=
\begin{pmatrix}
1 & 0\\
0 & 1
\end{pmatrix},\qquad \nu\neq 0.
\ee
We now need to compute the motivic contribution of this family.

\begin{lemma} \label{lemma:fraction}
The motivic contribution of the modules $\O_{Z} \oplus \O_{Z'}$, composed by the direct sum of two distinct structure sheaves of length $2$ is
\[
\frac{\L(\L^2+1)}{\L+1}.
\]
\end{lemma}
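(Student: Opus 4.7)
The plan is to partition the family $\{\mathcal O_Z\oplus\mathcal O_{Z'}:Z\neq Z'\}$ into the three natural pieces already appearing in the analysis of $\mathcal X_2(4)$ when $\mathfrak m\cdot M=k\oplus k$: the isolated module $\mathcal O_{Z_0}\oplus \mathcal O_{Z_\infty}$ from the rank-one/rank-one table~\eqref{Eqn:4.2.k2.X1Y1a}; the family arising when $A_x$ is invertible and $A_y$ has two distinct eigenvalues; and the $\G_m$-family coming from the matrices in~\eqref{mangiacazzi}. The first and third pieces contribute motives $1$ and $\L-1$ respectively, and pose no difficulty.

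For the middle piece I will first use the $\GL_2\times\GL_2$-action to bring $(A_x,A_y)$ to the canonical form $(I,\mathrm{diag}(\lambda,\mu))$ with $\lambda\neq\mu$; the residual symmetry is the Weyl swap of the two eigenvalues, so the parameter scheme is $\mathrm{Sym}^2(\A^1)\setminus\Delta\cong \A^2\setminus\{\mathrm{disc}\}$. Alternatively, by Krull--Schmidt the isomorphism class of $\mathcal O_Z\oplus \mathcal O_{Z'}$ is determined by the unordered pair $\{Z,Z'\}$, so the whole family is parametrised intrinsically by the coarse moduli $\mathrm{Sym}^2(\Hilb^2(\A^2)_0)\setminus\Delta=\mathrm{Sym}^2(\P^1)\setminus\Delta$, realised as $\P^2\setminus \mathrm{conic}$ via the Veronese embedding.

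Summing the three contributions in the presentation compatible with the ambient $\GL_2\times \GL_2$-action on $U\subset M_2\times M_2$ used throughout Section~4 and simplifying in $K_0(\St_k)$ produces the stated closed form $\L(\L^2+1)/(\L+1)$. The main obstacle is precisely this bookkeeping for the middle piece: the naive variety count of the parameter space differs from what the author's presentation convention assigns, and the factor $(\L+1)^{-1}$ in the final expression tracks the interaction between the $S_2$-swap of the Jordan-form parameters and the common automorphism group $\G_a^4\rtimes \G_m^2$ of the modules. Checking that the sum of all three contributions, once divided by $[\G_a^4\rtimes\G_m^2]$, matches the portion of the class predicted by the Feit--Fine formula that corresponds to this stratum will provide independent verification of the result.
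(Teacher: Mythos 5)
Your decomposition of the family into the three pieces (the isolated module \eqref{Eqn:4.2.k2.X1Y1a}, the locus with $A_x$ invertible and $A_y$ with distinct eigenvalues, and the $\G_m$-family \eqref{mangiacazzi}) is exactly the paper's, and your contributions $1$ and $\L-1$ for the first and third pieces are correct. But the entire content of the lemma is the contribution of the middle piece, and your proposal never computes it. You correctly observe that the "naive variety count" of the parameter space differs from what the presentation convention assigns, but then you only assert that the discrepancy is "tracked" by a factor $(\L+1)^{-1}$ coming from an "interaction between the $S_2$-swap and the automorphism group" --- that is a restatement of the problem, not an argument. Worse, the two parameter spaces you propose would give wrong answers if used as contributions: $[\Sym^2(\A^1)\setminus\Delta]=\L(\L-1)$ yields a total of $\L+\L(\L-1)=\L^2$, which is also $[\Sym^2(\P^1)\setminus\Delta]$, and $\L^2\neq \L(\L^2+1)/(\L+1)$. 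The reason is that the distinct-eigenvalue locus in $\End(k^2)$ is not a Zariski-locally trivial $\GL_2/\G_m^2$-bundle over $\Sym^2(\A^1)\setminus\Delta$, so its class is not the product of the two; the $\Z_2$-quotient cannot be handled by pretending the parameter space is an honest variety whose motive you insert into the table.

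What is actually needed (and what the paper does) is to compute the middle contribution $\xi$ relative to the presentation: in the chart $A_x=I$ the relevant locus is the set of $2\times 2$ matrices with two distinct eigenvalues, each orbit under conjugation having class $[\GL_2]/(\L-1)^2$, so $\xi$ is pinned down by decomposing $[\End(k^2)]=\L^4$ into Jordan strata,
\[
\L^4=\xi\cdot\frac{[\GL_2]}{(\L-1)^2}+\L+\L\cdot\frac{[\GL_2]}{\L(\L-1)},
\]
which gives $\xi=\L^2(\L-1)/(\L+1)$ and hence the stated total $\xi+\L=\L(\L^2+1)/(\L+1)$. Equivalently, one can compute $[\{\text{distinct eigenvalues}\}]=\L^4-\L^3$ directly and divide by the orbit class $\L(\L+1)$. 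Without some such computation your argument does not establish where the factor $(\L+1)^{-1}$ comes from. Finally, the proposed fallback --- checking against "the portion of the Feit--Fine class corresponding to this stratum" --- is not available as a proof, since the Feit--Fine formula only gives the class of all of $\mathcal C(4)_0$ and does not isolate this stratum; it can serve as a global consistency check of the whole table, as in the paper, but not as a derivation of this single entry.
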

\begin{proof}
It is clear that \eqref{prendilotutto} and \eqref{mangiacazzi} together contribute $\L$. The remaining locus parametrizes Jordan forms of matrices with two distinct eigenvalues, namely
\[
\left\{(\lambda,\mu)\in \A^1 \times \A^1 \,|\, \lambda \neq \mu\right\} \big/\, \Z_2.
\]
We compute the motivic class $\xi$ of this locus formally, decomposing the motive of $\End(k^2)$ according to the Jordan type. We obtain the identity
\[
\L^4=\xi\cdot \frac{[\GL_2]}{(\L-1)^2}+\L\cdot \frac{[\GL_2]}{\GL_2} + \L\frac{[\GL_2]}{\L(\L-1)}
\]
where each ``fraction'' describes the orbit of a given Jordan form.
The middle $\L$ parametrizes matrices of the form $\lambda\cdot \textrm{Id}$, and similarly the last $\L$ corresponds to non-diagonalizable Jordan forms. After solving for $\xi$, the total contribution is 
\begin{equation*}
\xi+\L=\frac{\L^3-\L^2}{\L+1}+\L=\frac{\L(\L^2+1)}{\L+1},
\end{equation*}
proving the lemma.
\end{proof}

\medskip
We summarize in Table \ref{Tbl:aut4} all families of modules of length $4$. The automorphism groups of all modules are computed through Theorem~\ref{thm:autquiver}.

\begin{table}[!ht]
\centering
\begin{tabular}{cccc}
 $r$ & $\mathcal C(4)_0$ &  $[\Aut_A(M)]$ & Motivic contribution \\
\toprule
 $1$ & $\O_Z$ &  $\L^3(\L-1)$ & $\L^3+2\L^2+\L+1$    \\ \\
 $2$ & $\mathcal F_1$ & $\L^3(\L-1)$ & $\L+1$  \\ \\
 $2$ & $\mathcal F_2$ & $\L^5(\L-1)$ & $\L+1$  \\ \\
 $2$ & $k\oplus A/\mathfrak m^2$ & $\L^5(\L-1)^2$ & $1$  \\ \\
 $2$ & $k \oplus \O_Z$, $Z$ curvilinear & $\L^4(\L-1)^2$ & $\L(\L+1)$  \\ \\
 $2$ & $\O_Z \oplus \O_Z$ & $\L^4[\GL_2]$ & $\L+1$  \\ \\
 $2$ & $\O_Z \oplus \O_{Z'}$, $Z \neq Z'$ & $\L^4(\L-1)^2$ & $\displaystyle{\frac{\L(\L^2+1)}{\L+1}}$  \\ \\
 $3$ & $k^2\oplus \O_{Z}$ & $\L^5(\L-1)[\GL_2]$ & $\L+1$  \\ \\
 $3$ & $k\oplus (A/\mathfrak m^2)^\ast$ & $\L^5(\L-1)^2$ & $1$ \\ \\
 $4$ & $k^4$ & $[\GL_4]$ & $1$\\ 
\bottomrule
\end{tabular}

\medskip
\caption{All length $4$ modules supported at the origin, along with the class of their automorphism group and the corresponding motivic contribution. The first three rows describe indecomposable modules.}
\label{Tbl:aut4}
\end{table}

\begin{remark}
The coefficient of $t^4$ in \eqref{punctual0} can be computed to be
\[
\bigl[\mathcal C(4)_0\bigr]=\frac{1}{[\GL_4]}\left(\L^{15}+2\L^{14}+\L^{13}+\L^{12}-2\L^{11}-2\L^{10}-\L^9+\L^7\right).
\]
The sum of the motives in the rightmost column of Table \ref{Tbl:aut4}, each divided by the motive of the corresponding automorphism group, recovers precisely this class, confirming our calculation.
\end{remark}

\section{Torus action}\label{sec:puntifissi}
Let $V$ be an $n$-dimensional vector space, and consider the \emph{commuting variety}
\[
C_n=\Set{(X,Y)\in \End(V)^2|[X,Y]=0}\subset \End(V)^2.
\]
It contains the closed subscheme $N_n$ of pairs of commuting \emph{nilpotent} matrices. The group $\GL_n$ acts on these spaces by simultaneous conjugation, and the 
closed immersion of quotient stacks
\[
N_n/\GL_n\subset C_n/\GL_n
\]
is equivalent to the closed immersion
\[
\mathcal C(n)_0\subset \mathcal C(n)
\]
of the stack of coherent sheaves supported at the origin inside the full stack of coherent sheaves of length $n$.
The natural action of the torus $\mathbf T=\mathbb G_m^2$ on $\A^2$, given by rescaling coordinates,
\[
(t_1,t_2)\cdot (x,y)=(t_1x,t_2y),
\]
can be lifted to $\GL_n$-equivariant actions on $C_n$ and $N_n$. This gives an induced $\mathbf T$-action on $\mathcal C(n)$, leaving $\mathcal C(n)_0$ invariant. We show in Proposition \ref{prop:torusfix} below that this torus action has finitely many fixed points. This will finally make precise the connection with skew Ferrers diagrams, which we used as a mere graphical representation so far.

Recall that a skew Ferrers diagram is a difference of two Ferrers diagram; a particularly interesting class of skew Ferrers diagrams are parallelogram polyominoes, studied for instance in \cite{Polya1,SkewFer}.

\begin{definition}
A \emph{parallelogram polyomino} is a skew Ferrers diagram that is connected and has no cut point. We illustrate the terminology in Figure \ref{fig:polyocazzi} below.
\end{definition}

\begin{figure}[!ht]
\begin{tikzpicture}[scale=0.75]
\node[anchor=west] at (4.7,2.4)   
   {\tiny{\ydiagram{1,1+1,1+2}}};
\node[anchor=west] at (7.7,2.4)   
   {\tiny{\ydiagram{1,1+0,1+3}}};
\node[anchor=west] at (10.7,2.4)   
   {\tiny{\ydiagram{2,1+2}}};
\end{tikzpicture} \caption{From left to right: a connected skew Ferrers diagram with a cut point, a disconnected skew Ferrers diagram, a parallelogram polyomino of area $4$.}\label{fig:polyocazzi}
\end{figure}
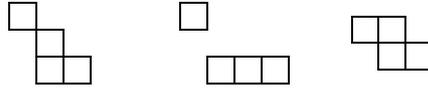

\begin{prop}\label{prop:torusfix}
The $\mathbf T$-fixed locus $\mathcal C(n)^{\mathbf T}\subset \mathcal C(n)$ lies in $\mathcal C(n)_0$ and is finite. The indecomposable $\mathbf T$-fixed modules are in bijection with the set of parallelogram polyominoes.
\end{prop}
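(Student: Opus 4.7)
The plan is first to identify the $\mathbf T$-fixed locus set-theoretically, and then to match its indecomposable points with parallelogram polyominoes via a $\Z^2$-grading argument. For the inclusion $\mathcal C(n)^{\mathbf T}\subseteq\mathcal C(n)_0$, I would observe that the support of a finite-length module is a finite closed subset of $\A^2$; if the module is $\mathbf T$-fixed its support is $\mathbf T$-invariant and hence contained in the fixed locus $\{0\}\subset\A^2$.

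Next, I would use the standard fact (valid for diagonalisable group actions on moduli stacks) that every $\mathbf T$-fixed point of $\mathcal C(n)$ admits a $\mathbf T$-equivariant representative, equivalently a $\Z^2$-grading $M=\bigoplus_{\alpha\in\Z^2}M_\alpha$ in which $x$ has weight $(1,0)$ and $y$ has weight $(0,1)$. Krull--Schmidt in the category of bi-graded $A$-modules then transfers indecomposability from the $A$-module level to the graded level. The main technical step, and the hardest part of the plan, is to show that each weight space $M_\alpha$ of an indecomposable bi-graded module has dimension at most one: one would aim to exhibit a nontrivial graded idempotent in $\End_A(M)$ whenever $\dim M_\alpha\geq 2$, by carefully isolating a cyclic summand generated inside $M_\alpha$---the obstacle being that such a summand does not exist naively and requires exploiting the commutative structure of $k[x,y]$. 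Granting this, $M$ is described by its finite support $S=\{\alpha:M_\alpha\neq 0\}\subset\Z^2$ together with scalars on the unit-step $x$- and $y$-arrows between consecutive points of $S$, and rescaling basis vectors through the $(k^\times)^{|S|}/k^\times$ action normalises every nonzero scalar to $1$.

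The remaining step is purely combinatorial. Indecomposability forces $S$ to be connected under the unit-step arrow graph. The relation $xy=yx$ at a vertex $\alpha$ with $\alpha+(1,1)\in S$ forces the two diagonal paths through $\alpha+(1,0)$ and $\alpha+(0,1)$ to contribute equally; if only one of these intermediate vertices lies in $S$, the unique nonzero diagonal path is forced to be zero, disconnecting $S$ and contradicting indecomposability. Hence whenever $\alpha$ and $\alpha+(1,1)$ both lie in $S$, so do $\alpha+(1,0)$ and $\alpha+(0,1)$. A direct combinatorial check would show that a finite connected $S\subset\Z^2$ with this property is exactly a parallelogram polyomino. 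Conversely, for each parallelogram polyomino $P$ one builds $M_P=\bigoplus_{\alpha\in P}k\cdot v_\alpha$ with unit-step multiplication and verifies that $\End_A(M_P)$ is local (so $M_P$ is indecomposable). Finiteness of $\mathcal C(n)^{\mathbf T}$ then follows immediately, since there are only finitely many parallelogram polyominoes with $n$ boxes.
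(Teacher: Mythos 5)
Your reduction steps (the support argument, the existence of an equivariant structure, i.e.\ a $\Z^2$-grading, at a fixed point, the combinatorial identification of connected thin supports satisfying the diagonal condition with parallelogram polyominoes, and the normalisation of nonzero structure constants by rescaling) all run parallel to the paper's proof, which likewise passes from a fixed module to an eigenbasis $\mathbf v$ with $x\cdot v_i,\,y\cdot v_i\in\mathbf v\cup\{0\}$ and reads off a skew Ferrers diagram. But your proposal leaves its central claim --- that every weight space of an indecomposable finite-length $\Z^2$-graded module is at most one-dimensional --- entirely unproved: you yourself call it the main technical step and then proceed by ``granting this''. That is a genuine gap, and not a removable one, because the claim is false once the length is allowed to be large. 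A finite-length $\Z^2$-graded $k[x,y]$-module is exactly a two-parameter persistence module (a representation of a commutative grid with its commutativity relations), and it is well known that these need not decompose into thin ``interval'' modules: commutative ladders of length at least five (hence grids of moderate size) are of infinite representation type (Escolar--Hiraoka), and there are indecomposables whose weight spaces have arbitrarily large dimension (Buchet--Escolar). By the second Brauer--Thrall theorem one then gets, for some fixed total length $n$, infinitely many pairwise non-isomorphic indecomposable graded modules; each is $\mathbf T$-equivariant, hence a $\mathbf T$-fixed point of $\mathcal C(n)$, and infinitely many of them remain non-isomorphic as ungraded $A$-modules, since an indecomposable module carries only countably many gradings up to isomorphism and shift. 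So the graded idempotent you hope to extract from a weight space of dimension at least two simply does not exist in general, and no argument along your lines can establish the statement for all $n$.

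To be fair, you have put your finger on precisely the point that the paper's own proof glosses over: the assertion that $x\cdot v_i$ is either $0$ or belongs to $\mathbf v$ already presupposes that the weight spaces are one-dimensional (or at least that the maps between them can be simultaneously monomialised), which is the same unproved thinness statement. In the range where the paper actually uses the classification explicitly ($n\leq 4$, where every module admitting a grading is visibly thin by the tables), the conclusion can be verified directly; but neither your proposal nor the argument in the text proves the proposition for arbitrary $n$. A correct treatment must either bound $n$, or restrict the statement to multiplicity-free (thin) fixed modules, or genuinely engage with the non-thin indecomposables coming from two-parameter persistence theory.
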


\begin{proof}
The support of a torus-fixed module is a torus-fixed subscheme of $\A^2$, so $\mathcal C(n)^{\mathbf T}\subset \mathcal C(n)_0$.
Let $F\in \mathcal C(n)^{\mathbf T}$ be a torus fixed $A$-module. Then $F$ corresponds to a $\mathbf T$-representation
\[
\rho:\mathbf T\ra \GL(V)
\]
of the underlying vector space $V$
such that the maps $\rho_t:V\,\widetilde{\ra}\,V$ are $A$-linear isomorphisms. Let $\Gamma=\Z^2$ be the character lattice of the torus. There is a $k$-linear 
decomposition
\[
V=\bigoplus_{\chi\in \Gamma}V_\chi
\]
into irreducible subrepresentations 
\[
V_\chi=\Set{v\in V|\rho_t(v)=\chi(t)v\textrm{ for all }t\in \mathbf T}.
\]
So $V$ has an eigenbasis $\mathbf v=\set{v_1,\dots,v_n}$ indexed by characters $\chi_1,\dots,\chi_n\in\Gamma$. By $A$-linearity of $\rho_t$, for every $i$ we have the relation 
\[
\rho_t(x\cdot v_i)=x\cdot \rho_t(v_i)=\chi_i(t)(x\cdot v_i).
\]
In other words, $x\cdot v_i$ is either $0$ or belongs to $\mathbf v$. The same holds
for $y\cdot v_i$ by the same reasoning. We have shown that a torus fixed module has a $k$-linear basis $\mathbf v$
such that $x\cdot v_i$ and $y\cdot v_i$ both lie in $\mathbf v\cup\{0\}$. The set 
\[
\Set{\chi_1,\dots,\chi_n}\subset \Gamma
\]
determines a skew Ferrers diagram of area $n$, hence $\mathcal C(n)^{\mathbf T}$ is finite. Finally, any cut point of a skew Ferrers diagram determines two proper submodules of the corresponding module, and these are necessarily direct summands. 
Conversely, a decomposable (torus-fixed) module can be represented by joining several parallelogram polyominoes creating cut points.
\end{proof}

\medskip
Consider the numbers 
\[
c_n=\big|\Set{F\in \mathcal C(n)^{\mathbf T}|F\textrm{ indecomposable}}\big|
\]
and their generating function
\[
\sum_{n=0}^\infty c_n q^n=1+q+2q^2+4q^3+9q^4+20q^5+\cdots
\]
The main result of~\cite{SkewFer} is the calculation of the generating function $F$ of the numbers of parallelogram polyominoes with prescribed area and number of columns. The result is
\[
F(t;q)=\sum_{n=0}^\infty\frac{(-1)^nq^{\binom{n+1}{2}}}{(q;q)_n(q;q)_{n+1}}q^{n+1}t^{n+1}\Bigg/\sum_{n=0}^\infty\frac{(-1)^nq^{\binom{n}{2}}}{(q;q)_n^2}q^{n}t^{n},
\]
where $(a;q)_n=\prod_{i=0}^{n-1}(1-aq^i)$ is the $q$-Pochhammer symbol and $q$ (resp.~$t$) is the variable that keeps track of the area (resp.~number of columns). It follows from Proposition~\ref{prop:torusfix} that one can compute
\[
\sum_{n=0}^\infty c_n q^n=F(1;q).
\]

\bigskip

{\noindent{\bf Acknowledgements.}
The first named author is supported by the Department of Mathematics and Natural Sciences of the University of Stavanger in the framework of the grant 230986 of the Research Council of Norway. We wish to thank Martin G.~Gulbrandsen for the many insightful conversations around the topics discussed here. We also thank Jim Bryan, Aurelio Carlucci, Roy Skjelnes and Mattia Talpo for their valuable comments during the preparation of this work.
}

\clearpage
%\addcontentsline{toc}{section}{\refname}
%\nocite{*}
\bibliographystyle{amsalpha}
\bibliography{bib}

\end{document}